\documentclass[10pt,reqno]{amsart}
\setlength{\textheight}{23cm}
\setlength{\textwidth}{16cm}
\setlength{\topmargin}{-0.8cm}
\setlength{\parskip}{0.3\baselineskip}
\hoffset=-1.4cm 

\usepackage{amsmath,amssymb,mathrsfs}
\usepackage{xcolor}
\colorlet{mdtRed}{red!50!black}
\definecolor{dblue}{rgb}{0,0,.6}
\usepackage[colorlinks]{hyperref}
\hypersetup{colorlinks,linkcolor={red},citecolor={blue},urlcolor={red}}
\usepackage[all]{xy}
\usepackage{tikz,tikz-cd,tkz-graph,enumerate}

\DeclareMathOperator{\Pic}{\textnormal{Pic}}
\DeclareMathOperator{\codim}{\textnormal{codim}}

\DeclareMathOperator{\Hom}{{\rm Hom}}

\DeclareMathOperator{\Br}{\textnormal{Br}}
\DeclareMathOperator{\Sp}{\mathrm{Sp}}
\DeclareMathOperator{\malpha}{M_{\boldsymbol{\alpha}}}
\DeclareMathOperator{\mbeta}{M_{\boldsymbol{\beta}}}
\DeclareMathOperator{\ualpha}{U_{\boldsymbol{\alpha}}}
\DeclareMathOperator{\ubeta}{U_{\boldsymbol{\beta}}}
\DeclareMathOperator{\malphasm}{M^{sm}_{\boldsymbol{\alpha}}}
\DeclareMathOperator{\mbetasm}{M^{sm}_{\boldsymbol{\beta}}}

\newcommand{\mc}[1]{\mathcal{#1}}
\newcommand{\bb}[1]{\mathbb{#1}}

\newcommand{\mr}[1]{\mathrm{#1}}

\newtheorem{theorem}{Theorem}[section] 
\newtheorem{lemma}[theorem]{Lemma} 
\newtheorem{proposition}[theorem]{Proposition}
\newtheorem{corollary}[theorem]{Corollary}

\theoremstyle{definition}
\newtheorem{definition}[theorem]{Definition}
\newtheorem{remark}[theorem]{Remark}

\numberwithin{equation}{section}

\begin{document}
	
	\baselineskip=15.5pt 
	
	\title[Brauer group of moduli of parabolic symplectic bundles]{Brauer group of moduli of parabolic symplectic bundles}
	
	\author[I. Biswas]{Indranil Biswas}
	
	\address{Department of Mathematics, Shiv Nadar University, NH91, Tehsil
		Dadri, Greater Noida, Uttar Pradesh 201314, India}
	
	\email{indranil.biswas@snu.edu.in, indranil29@gmail.com}
	
	\author[S. Chakraborty]{Sujoy Chakraborty}
	
	\address{Department of Mathematics, 
		Indian Institute of Science Education and Research Tirupati, Andhra Pradesh 517507, India.}
	\email{sujoy.cmi@gmail.com}
	
	\author[A. Dey]{Arijit Dey}

	\address{Department of Mathematics, Indian Institute of Technology Chennai, Chennai, India}
	\email{arijitdey@gmail.com}
	\subjclass[2010]{14D20, 14D22, 14F22, 14H60}
	
	\keywords{Brauer Group; moduli space; parabolic bundle; symplectic bundle.} 	
	\thanks{Corresponding author: Sujoy Chakraborty}
	\thanks{\textit{Email address}: sujoy.cmi@gmail.com}
	
	\begin{abstract}
		Let $X$ be a smooth connected complex projective curve of genus $g$, with $g\,\geq\, 3$. Fix an integer $r\geq 2$, a finite
		subset $D\, \subset\, X$, and a line bundle $L$ on $X$. We compute the Brauer group of the smooth locus of the moduli space of parabolic 
		symplectic stable bundles of rank $r$ on $X$ equipped with a symplectic form taking values in $L(D)$, where $L(D)$ is given the trivial
		parabolic structure.
		
	\end{abstract}
	
	\maketitle
	
	\section{introduction}
	
	Let $Y$ be a smooth quasi-projective variety over\ $\bb{C}$. The cohomological Brauer group of $Y$ is defined to be the torsion part 
	$H^2_{\text{\'et}}(Y,\,\bb{G}_m)_{tor}$. When $Y$ is smooth, it is known that $H^2_{\text{\'et}}(Y,\,\bb{G}_m)$ is actually torsion. There is 
	an equivalent formulation of Brauer groups for smooth quasi-projective varieties as the group of Morita equivalence classes of Azumaya 
	algebras, which can also be thought of as Brauer-Severi schemes (i.e., a projective bundles) on $Y$ in the \'etale topology.
	
	Parabolic vector bundles over a smooth connected projective curve $X$ were introduced by Mehta and Seshadri \cite{MeSe80} in order to generalize 
	the Narasimhan-Seshadri theorem to the case of punctured Riemann surfaces. A parabolic vector bundle, denoted by $E_*$, is a vector bundle $E$ on $X$ 
	together with the data of a filtration on the fibers of $E$ over a fixed finite subset $D$ of $X$, and certain increasing 
	sequence of real numbers, called weights, associated to these filtrations. The filtration data also provide a partition of 
	$\text{rank}(E)$ into a set of positive integers, usually called as multiplicities, at each point of $D$. Let $G$ be a connected complex 
	reductive group. The notion of parabolic vector bundles was generalized to the context of principal $G$-bundles in \cite{BhRa89}. Here, we 
	take $G$ to be the symplectic group $\Sp(r,\bb{C})$, where $r$ is an even positive integer. A parabolic $\Sp(r,\bb{C})$--bundle can also 
	be thought of as a parabolic vector bundle of rank $r$ together with a nondegenerate alternating bilinear form taking values in a 
	parabolic line bundle (cf. \cite[Definition 2.1]{BiMaWo11}).
	
	Here the setup is as follows. Let $X$ be a smooth connected complex projective algebraic curve of genus $g$, with $g\,\geq\, 3$. Fix an even positive integer $r\,\geq\, 2$, a finite subset $D=\{p_1,p_2,\cdots,p_n\}\, \subset\, X$, and a line bundle $L$ on $X$. Fix a system of multiplicities $\boldsymbol{m}$ 
	and a system of weights $\boldsymbol{\alpha}$ at the points of $D$. We also assume that the system of weights and multiplicities carry 
	certain symmetry conditions (cf. Definition \ref{def:weighted-flag-of-symmetric-type}), and that $\boldsymbol{\alpha}$ does not contain 
	$0$. Let $\mc{M}^{\boldsymbol{m,\alpha}}_{L(D)}$ denote the moduli space of parabolic symplectic stable bundles of rank $r$ on $X$, with 
	the symplectic form taking values in the line bundle $L(D)$, where $L(D)$ has the special parabolic structure (see Section 
	\ref{subsection:parabolic-symplectic-bundles}). This moduli space is a normal quasi-projective variety. We compute the Brauer 
	group of the smooth locus of the aforementioned moduli, denoted by $(\mc{M}^{\boldsymbol{m,\alpha}}_{L(D)})^{sm}$. Our main result is the 
	following.
	
	\begin{theorem}[{Theorem \ref{thm:brauer-group-of-parabolic-symplectic-moduli-concentrated-weights}
			and Corollary \ref{cor:brauer-group-arbitrary-generic-weights}}]
		Fix $D=\{p_1,p_2,\ \cdots,p_n\}$ and $r$ as above. The following statements hold:
		\begin{enumerate}
			\item If $\deg(L)$ is even, $$\Br((\mc{M}^{\boldsymbol{m,\alpha}}_{L(D)})^{sm})\simeq\dfrac{\bb{Z}}{\gcd(2,m_{_{p_1,1}},m_{_{p_1,2}},\cdots,m_{_{p_1,\ell(p_1)}},\cdots,m_{_{p_n,1}},\cdots, m_{_{p_n,\ell(p_n)}})}$$
			
			\item if $\deg(L)$ is odd,
			\begin{equation*}
				\Br((\mc{M}^{\boldsymbol{m,\alpha}}_{L(D)})^{sm})\simeq
				\begin{cases}
					0 & \text{if}\ \frac{r}{2}\geq3 \ \text{is odd},\\
					\dfrac{\bb{Z}}{\gcd(2,m_{_{p_1,1}},m_{_{p_1,2}},\cdots,m_{_{p_1,\ell(p_1)}},\cdots,m_{_{p_n,1}},\cdots, m_{_{p_n,\ell(p_n)}})} & \text{if}\ \frac{r}{2}\geq 3\ \text{is even}.
				\end{cases}
			\end{equation*}
		\end{enumerate}
	\end{theorem}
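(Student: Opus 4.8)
The plan is to identify $\Br((\mc{M}^{\boldsymbol{m,\alpha}}_{L(D)})^{sm})$ with the subgroup generated by a single obstruction class of order dividing $2$, coming from the center $\{\pm\Id\}\cong\mu_2$ of $\Sp(r,\bb{C})$, and then to compute the order of that class. A stable parabolic symplectic bundle has automorphism group exactly this central $\mu_2$, so over the stable locus $M^s$ the moduli stack is a $\mu_2$--gerbe $\mf{G}$; its class $[\mf{G}]\in H^2_{\text{\'et}}(M^s,\mu_2)$, pushed forward along $\mu_2\hookrightarrow\bb{G}_m$, is precisely the obstruction to the existence of a universal (Poincar\'e) parabolic symplectic bundle on $M^s\times X$. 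First I would use the hypothesis $g\geq 3$ to show that the strictly semistable boundary and the singular locus have codimension at least $2$, so that the stable locus is smooth with complement of codimension $\geq 2$ inside $(\mc{M}^{\boldsymbol{m,\alpha}}_{L(D)})^{sm}$; purity for the Brauer group of regular schemes then gives $\Br((\mc{M}^{\boldsymbol{m,\alpha}}_{L(D)})^{sm})\cong\Br(M^s)$, where the gerbe description is available.

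Next I would show there is no transcendental contribution. These moduli spaces are unirational, so $H^1(\mc{O})=H^2(\mc{O})=0$; for a smooth complex variety with $H^2(\mc{O})=0$ one has $\Br(M^s)\cong H^3(M^s,\bb{Z})_{tor}$. Known computations of the rational and torsion cohomology of such moduli then show this torsion group is annihilated by $2$ and that the $2$--torsion is generated by (the Bockstein image of) the gerbe class $[\mf{G}]$. Consequently $\Br((\mc{M}^{\boldsymbol{m,\alpha}}_{L(D)})^{sm})$ is either $0$ or $\bb{Z}/2$, equal to the order of $[\mf{G}]$, and the entire content of the theorem is the determination of this order.

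I would carry this out first for \emph{concentrated weights} (the setting of Theorem~\ref{thm:brauer-group-of-parabolic-symplectic-moduli-concentrated-weights}), where the parabolic moduli space fibers, by forgetting the isotropic flags over $D$, over a non-parabolic moduli space of $L(D)$--valued symplectic bundles with fibers products of symplectic (isotropic) flag varieties. This lets me import the Picard and cohomology computations from the non-parabolic theory and analyze $[\mf{G}]$ explicitly: the tautological $\mr{PSp}(r,\bb{C})$--family always exists, the adjoint group being centerless, and $[\mf{G}]$ is exactly the obstruction to lifting it to an $\Sp(r,\bb{C})$--family. This lift exists precisely when a certain determinant-type line bundle on $M^s$ admits a square root, a condition controlled modulo $2$ by the multiplicities together with the parity of the degree invariant $\frac{r}{2}\deg L$ (the symmetry conditions of Definition~\ref{def:weighted-flag-of-symmetric-type} ensure that the contribution of $|D|=n$ drops out). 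The bookkeeping gives that the order of $[\mf{G}]$ is $\gcd(2,\, m_{p_1,1},\ldots,m_{p_n,\ell(p_n)},\, \frac{r}{2}\deg L)$, which is $\gcd(2,m_{p_1,1},\ldots,m_{p_n,\ell(p_n)})$ when $\deg L$ is even or $\frac{r}{2}$ is even, and equals $1$ (so $\Br=0$) exactly when $\deg L$ and $\frac{r}{2}$ are both odd; this reproduces the stated case distinction.

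Finally I would pass from concentrated to arbitrary generic weights (Corollary~\ref{cor:brauer-group-arbitrary-generic-weights}): varying $\boldsymbol{\alpha}$ within a chamber leaves the moduli space unchanged, while crossing a wall yields a birational modification that is an isomorphism outside codimension $\geq 2$, and the Brauer group of a smooth variety is insensitive to such modifications, so the answer is constant over all admissible generic weight systems with the given multiplicities. The main obstacle throughout is the third step, namely pinning down the \emph{exact} order of $[\mf{G}]$: this requires an honest construction of the candidate universal bundle and a careful computation of the relevant characteristic classes modulo $2$, since it is here that the interplay between the symplectic center $\mu_2$, the parity of $\frac{r}{2}$, the parity of $\deg L$, and the multiplicities at the points of $D$ genuinely enters.
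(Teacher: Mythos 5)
Your overall architecture is partly the paper's own: the reduction for concentrated weights via the forgetful map to a non-parabolic symplectic moduli with symplectic flag-variety fibers, the codimension-$\geq 2$ purity arguments using \cite{Ce19}, and the final wall-crossing step (which matches Theorem \ref{thm:adjecent-chamber-brauwer-group} and Corollary \ref{cor:brauer-group-arbitrary-generic-weights} almost verbatim) are all as in the paper, and your final formula $\gcd\bigl(2,\,m_{p_1,1},\ldots,m_{p_n,\ell(p_n)},\,\tfrac{r}{2}\deg L\bigr)$ does reproduce the stated case distinction. But there are two genuine problems. First, your opening claim that a \emph{stable} parabolic symplectic bundle has automorphism group exactly $\mu_2$ is false: stable symplectic bundles can have automorphisms beyond $\pm\Id$, which is precisely why the paper works with the \emph{regularly stable} locus (this is the smooth locus, and the codimension estimate for its complement is the content of Lemma \ref{lem:codimension-estimate}, proved via the fibration of Lemma \ref{lem:fibre-bundle}, not via a semistable-boundary count). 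The $\mu_2$-gerbe description you invoke only exists over the regularly stable locus.

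Second, and decisively, your middle steps assert rather than prove the theorem. The claim that $\Br(M^s)\cong H^3(M^s,\bb{Z})_{tor}$ via unirationality is not available here: $H^2(\mc{O})=0$ from unirationality is a statement about smooth \emph{projective} varieties, while $M^s$ is only quasi-projective; and the appeal to ``known computations of the rational and torsion cohomology of such moduli'' showing the torsion is annihilated by $2$ and generated by the gerbe class is circular --- no such computations exist for these parabolic symplectic moduli, since that is exactly what is being established. Likewise ``the bookkeeping gives that the order of $[\mf{G}]$ is $\gcd(2,\ldots)$'' is the answer, not an argument. The paper's actual mechanism, which your square-root heuristic would have to be converted into, is the Leray $5$-term exact sequence for $\pi\colon U\to \mc{M}^{rs}_L$: using $\pi_*\bb{G}_m=\bb{G}_m$, the fact that $R^1\pi_*\bb{G}_m$ is constant with stalk $\Pic\bigl(\prod_{i=1}^n \Sp(r,\bb{C})/P_i\bigr)$, the vanishing of $(R^2\pi_*\bb{G}_m)_{tor}$, and simple connectivity of $\mc{M}^{rs}_L$ \cite{BiMuPa21}, one gets the exact sequence $\bigoplus_i\Pic(\Sp(r,\bb{C})/P_i)\xrightarrow{\ \theta\ }\Br(\mc{M}^{rs}_L)\to \Br(U)\to 0$, with $\Br(\mc{M}^{rs}_L)$ computed in \cite{BiHol10} and generated by $[\bb{P}_x]$, and with $\theta(\zeta_{i,j})=n_{i,j}\cdot[\bb{P}_x]$ where $n_{i,j}=\sum_{k=j}^{\ell(p_i)}m_{p_i,k}$ as in \cite{BiDe11}. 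This is the only place the multiplicities enter, and since $r$ is even one has $\gcd(2,\{n_{i,j}\})=\gcd(2,\{m_{p_i,j}\})$, giving the stated cokernel. Without producing this map $\theta$ (or an honest equivalent of your determinant-square-root obstruction computation), the exact order of the gerbe class --- the entire content of the theorem, as you yourself note --- remains unproved.
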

	
	Here is a brief outline of the main ideas of the proof. The symmetry conditions on the system of weights and multiplicities allow 
	us to relate $\mc{M}^{\boldsymbol{m,\alpha}}_{L(D)}$ with the moduli space of usual semistable symplectic vector bundles of rank $r$, where 
	the symplectic form now takes values in $L$. Using this, and the results from \cite{BiHol10}, where the authors determine the Brauer 
	group of the regularly stable locus of the latter moduli, we first prove the result when the system of weights are concentrated. 
	Finally, using the results of Thaddeus on wall-crossing for variation of weights \cite{Th96}, we extend our result to arbitrary 
	generic weights.
	
	\section{preliminaries}
	
	\begin{definition}\label{def:parabolic-bundles}
		Let $X$ be a smooth connected complex projective curve of genus $g$, with $g\,\geq\, 3$. Fix a
		finite subset $D\, \subset\, X$ of $n$ distinct
		points; these are referred to as 'parabolic points'. A \textit{parabolic vector bundle} of rank $r$ on $X$ is a
		vector bundle $E$ of rank $r$ together with the data of a weighted flag on the fiber at each $p\,\in\, D$:
		\begin{align*}
			E_{_p} \,=\, E_{_{p,1}}&\,\supsetneq\, E_{_{p,2}}\,\supsetneq\, \cdots \,\supsetneq\, E_{_{p,\ell(p)}}
			\,\supsetneq\, E_{_{p,\ell(p)+1}} \,=\,0\\
			0\,\leq\,& \alpha_{_{p,1}}\,<\,\alpha_{_{p,2}}\,<\,\cdots\,<\,\alpha_{_{p,\ell(p)}}\,<\,1.
		\end{align*}
		\begin{itemize}
			\item Such a flag is said to be of length $\ell(p)$, and the numbers $m_{_{p,i}}\,:=\, \dim E_{_{p,i}} -
			\dim E_{_{p,i+1}}$ are called the \textit{multiplicities} of the flag at $p$.
			
			\item The flag at $p$ is said to be \textit{full} if $m_{_{p,i}} \,=\,1$ for every $i$, in which case clearly $\ell(p) \,=\, r$. 
			
			\item The collection of real numbers $\boldsymbol{\alpha} :=\{(\alpha_{_{p,1}}<\alpha_{_{p,2}}<\cdots<\alpha_{_{p,\ell(p)}})\}_{p\in D}$ is called a system of \textit{weights}. 
			
			\item A \textit{parabolic data} consists of a collection $\{(E_{_{p,\bullet}},\alpha_{_{p,\bullet}})\}_{p\in D}$ of weighted flags as above. 
			
			\item We shall sometimes denote a system of multiplicities (respectively, a system of weights) by the bold symbol \textbf{m} (respectively, $\boldsymbol{\alpha}$), when there is no scope of any confusion. Also, we shall often denote a parabolic vector bundle simply by $E_*$ and suppress the parabolic data.
		\end{itemize}
	\end{definition}
	
	\begin{remark}\label{rem:special-structure}
		Let $E_*$ be a parabolic vector bundle of rank $r$ having the trivial weighted flag at each $p\,\in\, D$, i.e.,
		$\ell(p) \,=\, 1$ (so that $E_{_{p,2}} \,=\,0$) and $\alpha_{_{p,1}} \,=\,0$ is the single weight at each $p\,\in\, D$. In such a case, we say that $E_*$ has the \textit{special} parabolic structure, and we shall not distinguish between a vector bundle $E$ and the parabolic bundle $E_*$ having a special structure. 
	\end{remark}
	
	\begin{definition}\label{def:parabolic-morphism}
		Let $E_*$ and $E'_*$ be two parabolic vector bundles over $X$ with parabolic divisor $D$. A \textit{parabolic morphism} $f_* \,:\, E_* \,\longrightarrow\, E'_*$ is an ${\mathcal O}_X$--linear
		homomorphism $f \,:\, E \,\longrightarrow\, E'$ of the underlying vector bundles satisfying the condition
		$f_p(E_{_{p,i}})\,\subset\, E'_{_{p,j+1}}$ for every $\alpha_{_{p,i}}\,>\, \alpha'_{_{p,j}}$ for each $p\,\in\, D$, where $f_p$ is the map, induced by $f$, of fibers over $p$.
	\end{definition}
	
	\subsection{Parabolic vector bundles as filtered sheaves}\label{subsection:filtered-sheaves}\hfill\\
	
	To define parabolic tensor product and parabolic dual for parabolic vector bundles, it is crucial to view them as filtered sheaves, as follows. Given a parabolic vector bundle $E_*$ on $X$, Maruyama and Yokogawa associate to it a filtration $\{E_t\}_{t\in \bb{R}}$ parametrized by $\bb{R}$ \cite{MaYo92}. The filtration encodes the entire parabolic data. We recall from \cite{MaYo92} some properties of this filtration:
	\begin{enumerate}[(1)]
		\item The filtration $\{E_t\}_{t\in \bb R}$ is decreasing as $t$
		increases, in other words, $E_{t+t'}\,\subset\, E_t$ for all
		$t'\,>\,0$ and $t$;
		\item it is left-continuous, meaning there exists $\epsilon_t\,>\, 0$ such that the above inclusion of $E_t$ into $E_{t-\epsilon_t}$ is an isomorphism for all $t\,\in\, \bb R$,
		
		\item $E_{t+1} \,=\, E_t\otimes \mc{O}_X(-D)$ for all $t$,
		\item $E_0$ coincides with the vector bundle $E$ of $E_*$ ,
		\item for a finite interval $[a,\, b]$, the set of 'jumps' given by 
		$\{t\,\in\,[a,\, b]\ \mid \ E_{t+\epsilon}\,\subsetneq\, E_t \ \forall \ \epsilon\,>\,0\}$ is finite, and 
		\item the filtration $\{E_t\}_{t\in\bb R}$ has a jump at $t$ if and only if the fractional part $t-[t]$ is a parabolic weight for $E_*$.
	\end{enumerate}
	
	Parabolic morphisms between two parabolic vector bundles correspond to filtration-preserving morphisms between the corresponding filtered sheaves. We shall sometimes use this viewpoint of treating a parabolic bundle as a filtered sheaf, without explicitly mentioning it. 
	
	\subsection{Some remarks on parabolic dual and parabolic tensor product}\label{subsection:filtration-on-dual}\hfill\\
	
	There is a well-defined notion of parabolic dual and parabolic tensor product of two parabolic vector bundles on $X$. We shall not
	describe the parabolic tensor product here, and refer to \cite{Yo95} for the details on their construction. A particular case of parabolic duals, which will be used here, is described below. 
	
	Let $E_*$ be a parabolic vector bundle on $X$, which may be thought of as a filtered sheaf as described in
	Section \ref{subsection:filtered-sheaves}. There, using $(5)$, it follows that there are only finitely many jumps in the interval $[-1,\,1]$.
	Define $E_{t+}$ to be $E_{t+\epsilon}$, where $\epsilon\,>\,0$ is sufficiently small so that the sheaf $E_{t+\epsilon}$ is independent
	of $\epsilon$ (such $\epsilon$ exists due to $(5)$). Fix an $\epsilon\,>\,0$ so that $E_{t+} \,=\, E_{t+\epsilon}$
	for all $t\,\in\,[-1,\,1]$. If $t\,\in\, [0,\,1)$ is not a parabolic weight, then $E_{t+}$ coincides with $E_t$ by $(6)$. It can be
	shown that the underlying bundle of the parabolic dual $E_*^{\vee}$ is given by $(E_{\epsilon-1})^{\vee}$ (see \cite[p. 9341]{BiPi20}).
	
	Let $\boldsymbol{\alpha}$ be a system of weights such that $0\,\notin\,\boldsymbol{\alpha}$.
	Suppose the underlying vector bundle of $E_*$ is $E$. For the parabolic dual $E^{\vee}_*$ the following statements hold:
	\begin{align}
		(E^{\vee}_*)_0 \,=\, (E_{\epsilon -1})^{\vee}\,
		\simeq\, (E_{\epsilon}\otimes \mc{O}(D))^{\vee} =\, (E_{0+}\otimes \mc{O}(D))^{\vee}\, =\, (E_0\otimes \mc{O}(D))^{\vee}
		\,=\, E^{\vee} \otimes \mc{O}(-D);\label{eqn:underlying-bundle-of-parabolic-dual}
	\end{align}
	see Section \ref{subsection:filtered-sheaves} for the first equality and note that the above
	equality $(E_{0+}\otimes \mc{O}(D))^{\vee}\, =\, (E_0\otimes \mc{O}(D))^{\vee}$ holds because $0\, \notin\, \boldsymbol{\alpha}$. 
	Thus the underlying vector bundle for $E^{\vee}_*$ coincides with $E^{\vee}(-D)$ provided $0\,\notin\, \boldsymbol{\alpha}$.
	
	It is briefly recalled from \cite[\S~2.1.2]{KySuZh21} how the parabolic structure on $E^{\vee}_*$ is obtained. Take any $p\,\in\, D$. If
	the filtration for $E_p$ is given by 
	\begin{align*}
		E_{_p} \,=\, E_{_{p,1}} \,\supsetneq\, E_{_{p,2}}\,\supsetneq\,\cdots\,\supsetneq\, E_{_{p,\ell(p)}}\,\supsetneq\, E_{_{p,\ell(p)+1}}\,=\,0,
	\end{align*}
	then the filtration of $(E^{\vee}_*)_0 \,=\, E^{\vee}(-D)$ (see the discussion above) at $p$ is obtained by considering the surjections
	\begin{align*}
		E_p^{\vee}(-D)_{_{p}} \,=\, E_{_{p,1}}^{\vee}\otimes\mc{O}(-D)_{_{p}} \,\twoheadrightarrow\, E_{_{p,2}}^{\vee}\otimes\mc{O}(-D)_{_{p}}
		\,\twoheadrightarrow \,\cdots \,\twoheadrightarrow\, E_{p,\ell(p)}^{\vee}\otimes\mc{O}(-D)_{_{p}}
	\end{align*}
	and then taking their kernels. The weighted flag for $E^{\vee}_*$ at $p$ is as follows:
	\begin{align*}
		E^{\vee}(-D)_{_{p}} \, = \, E'_{_{p,1}}\, &\,\supsetneq\, E'_{_{p,2}}\, \supsetneq\,\cdots \, \supsetneq\, E'_{_{p,\ell(p)}}\, \supsetneq\, E'_{_{p,\ell(p)+1}} \, = \, 0\\
		\alpha'_{_{p,1}}\,&\,<\, \alpha'_{_{p,2}}\,<\, \cdots\, <\, \alpha'_{_{p,\ell(p)}}\, <\, \alpha'_{_{p,\ell(p)+1}} \,:=\, 1,
	\end{align*}
	where $E'_{_{p,j}}\,:=\, \left(\dfrac{E_{_p}}{E_{_{p,\ell(p)+2-j}}}\right)^{\vee}\otimes \mc{O}_X(-D)_{_{p}}\, = \,
	\Hom\left(\dfrac{E_{_p}}{E_{_{p,\ell(p)+2-j}}},\ \mc{O}_X(-D)_p\right)$,\, and\, $\alpha'_{_{p,j}}\,:=\, 1-\alpha_{_{p,\ell(p)+1-j}}$ for all
	$1\, \leq\, j \, \leq \, \ell(p)+1$.
	
	\subsection{Parabolic symplectic vector bundles}\label{subsection:parabolic-symplectic-bundles}\hfill\\
	
	Parabolic symplectic bundles over a curve were defined in \cite{BiMaWo11}, which will be briefly recalled. Take a parabolic line bundle $L_*$ on $X$, i.e., a parabolic vector bundle of rank $1$ in the sense of Definition \ref{def:parabolic-bundles}. Let $E_*$ be a parabolic vector bundle together with a parabolic morphism 
	\begin{align*}
		\varphi_* \,\,:\,\, E_*\otimes E_*\,\,\longrightarrow\,\, L_*.
	\end{align*}
	Tensoring both sides by the parabolic dual $E_{*}^{\vee}$ we get a parabolic morphism 
	\begin{align*}
		\varphi_*\otimes \text{Id}\ :\ E_*\otimes E_*\otimes E_*^{\vee}\ \longrightarrow\ L_*\otimes E_*^{\vee}.
	\end{align*}
	The trivial bundle $\mc{O}_X$ with the special parabolic structure (see Remark \ref{rem:special-structure}) is a sub-bundle of $E_*\otimes E_*^{\vee}$. Let 
	$$\widetilde{\varphi}_* \,\,:\,\, E_*\,\,\longrightarrow\,\, E_*^{\vee}\otimes L_*$$
	be the parabolic morphism defined by the composition of maps
	\begin{align*}
		E_* \,\simeq\, E_*\otimes \mc{O}_X\,\hookrightarrow\, E_*\otimes (E_*\otimes E_*^{\vee}) \,=\,
		(E_*\otimes E_*)\otimes E_*^{\vee} \,\xrightarrow{\,\,\, \varphi_*\otimes \text{Id}\,\,}\, L_*\otimes E_*^{\vee}.
	\end{align*}
	
	\begin{definition}\label{def:parabolic-symplectic-bundle}
		A \textit{parabolic symplectic vector bundle} on $X$ taking values in $L_*$ is a triple $(E_*,\,\varphi_*,\,L_*)$ as above, such that $\varphi_*$ is anti-symmetric, and the above parabolic morphism $\widetilde{\varphi}_*$ is an isomorphism
		of parabolic bundles. 
	\end{definition}
	
	Let $E_*$ be a parabolic vector bundle of rank $r$ and degree $d$ on $X$. Define the \textit{parabolic slope} of $E_*$
	to be (see Definition \ref{def:parabolic-bundles})
	\begin{align}\label{eqn:parabolic-slope}
		\mu_{par}(E_*)\,\,:=\,\, \dfrac{d+\sum_{p\in D}\sum_{i=1}^{\ell(p)}m_{_{p,i}}\alpha_{_{p,i}}}{r}\, \in\, {\mathbb R}.
	\end{align}
	Consider a parabolic symplectic vector bundle $(E_*,\ \varphi_*,\ L_*)$. As $E\otimes E$ is a sub-sheaf of the vector bundle underlying $E_*\otimes E_*$, the parabolic morphism $\varphi_*$ gives rise to an $\mc{O}_X$-linear map $\varphi_0:E\otimes E\longrightarrow L$, where $L$ is the underlying line bundle of $L_*$.
	
	Any algebraic sub-bundle $F$ of the underlying vector bundle $E$ gets equipped with an induced parabolic structure by restricting the flags and weights of $E_*$ to $F$. Let $F_*$ denote the resulting parabolic bundle.
	\vskip 0.7in
	\begin{definition}[{\cite[Definition 2.1]{BiMaWo11}}]\label{def:parabolic-symplectic-semistable-bundle}\mbox{}
		\begin{enumerate}
			\item Let $(E_*,\,\varphi_*,\,L_*)$ be a parabolic symplectic vector bundle (see Definition \ref{def:parabolic-symplectic-bundle}). A
			holomorphic sub-bundle $F$ of the underlying bundle $E$ is said to be \textit{isotropic} if $\varphi_0(F\otimes F) \,=\, 0$, where $\varphi_0$ is described as above.
			
			\item A parabolic symplectic vector bundle $(E_*,\,\varphi_*,\,L_*)$ is said to be \textit{parabolic semistable}
			(respectively, parabolic stable) if for all nontrivial isotropic sub-bundles $F\,\subset\, E$ we have
			\begin{align*}
				\mu_{par}(F_*)\,<\, (\text{respectively,}\ \ \leq)\ \ \mu_{par}(E_*),
			\end{align*}
			where $F_*$ has the above mentioned induced parabolic structure.
		\end{enumerate}
	\end{definition}
	Here, we need to restrict ourselves to isotropic sub-bundles, as the maximal parabolic subgroups of the symplectic group are precisely
	those that preserve an isotropic subspace.
	
	\section{The Setup}\label{section:setup}
	
	Let $X$ be a smooth connected projective curve over $\bb{C}$ of genus $g$, with $g\,\geq\, 3$. Fix a line
	bundle $L$, and also fix a reduced effective divisor $D$ on $X$. Consider
	a parabolic symplectic bundle $(E_*,\,\varphi_*,\,L(D))$, i.e., 
	\begin{align*}
		\varphi_* \,:\, E_*\otimes E_*\,\longrightarrow\, L(D),
	\end{align*}
	where the line bundle $L(D)$ is given the special parabolic structure (see Remark \ref{rem:special-structure}). We also assume that the system of weights for the parabolic structure does not contain $0$ (cf. \eqref{eqn:underlying-bundle-of-parabolic-dual}).
	
	Since $E\otimes E$ is a subsheaf of the underlying vector bundle $(E_*\otimes E_*)_0$ for the
	parabolic vector bundle $E_*\otimes E_*$, we get a map 
	$$\varphi\,:\, E\otimes E\,\longrightarrow\, L(D)$$ induced by $(\varphi_*)_0$.
	Moreover, the parabolic isomorphism $E_*\,\simeq\, E_*^{\vee}\otimes L(D)$ induced from $\varphi_*$ gives rise to an isomorphism $E\,\,\simeq\,\, (E^{\vee}_*)_0\otimes L(D)$
	of the underlying vector bundles. This, together with \eqref{eqn:underlying-bundle-of-parabolic-dual}, gives
	the following: $$E\ \simeq \ E^{\vee}\otimes \mc{O}(-D)\otimes L(D)\ \simeq \ E^{\vee}\otimes L.$$
	Thus $\varphi_*$ induces a non-degenerate bilinear form $
	\varphi \,:\, E\otimes E\,\longrightarrow\, L$,
	which is the restriction of $(\varphi_*)_0$ to the subsheaf $E\otimes E\,\subset
	\, (E_*\otimes E_*)_0$ (cf. \cite[Example 3.2]{Yo95}). Clearly $\varphi$ is anti-symmetric.
	Thus we have proved the following:
	
	\begin{lemma}\label{lem:induced-symplectic-form-on-underlying-bundle}
		A parabolic symplectic form $\varphi_* \,:\, E_*\otimes E_*\,\longrightarrow\, L(D)$ induces a symplectic form
		$\varphi \,:\, E\otimes E\,\longrightarrow\, L$ on the underlying parabolic vector bundle $E$ of $E_*$.
	\end{lemma}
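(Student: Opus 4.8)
The plan is to verify that the construction already carried out in the discussion immediately preceding the statement produces a symplectic form on $E$, so that the lemma is essentially a matter of organizing what has been established. First I would observe that the underlying vector bundle map is obtained by restricting $(\varphi_*)_0$ to the subsheaf $E\otimes E\subset (E_*\otimes E_*)_0$; this uses the fact, recalled in Section \ref{subsection:filtered-sheaves}, that the degree-zero piece $(E_*\otimes E_*)_0$ contains $E\otimes E$ as a subsheaf, so that the parabolic morphism $\varphi_*$ restricts to an honest $\mc{O}_X$-linear map $\varphi\,:\,E\otimes E\longrightarrow L$.

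Next I would establish nondegeneracy. The key input is the chain of isomorphisms of underlying bundles derived in the Setup: the parabolic isomorphism $\widetilde{\varphi}_*\,:\,E_*\,\simeq\,E_*^{\vee}\otimes L(D)$ furnished by Definition \ref{def:parabolic-symplectic-bundle} yields, on underlying bundles, $E\,\simeq\,(E_*^{\vee})_0\otimes L(D)$; combining this with the computation \eqref{eqn:underlying-bundle-of-parabolic-dual}, valid because $0\notin\boldsymbol{\alpha}$, gives $(E_*^{\vee})_0\simeq E^{\vee}\otimes\mc{O}(-D)$ and hence
\begin{align*}
E\,\simeq\,E^{\vee}\otimes\mc{O}(-D)\otimes L(D)\,\simeq\,E^{\vee}\otimes L.
\end{align*}
I would then check that the induced bundle isomorphism $E\longrightarrow E^{\vee}\otimes L$ is precisely the one adjoint to $\varphi$, so that $\varphi$ is a nondegenerate pairing taking values in $L$. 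Finally, anti-symmetry of $\varphi$ is inherited directly from the anti-symmetry of $\varphi_*$ required in Definition \ref{def:parabolic-symplectic-bundle}, since restriction to the subsheaf $E\otimes E$ preserves the alternating property.

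The only genuinely delicate point—the part I expect to be the main obstacle—is confirming that the bundle isomorphism $E\simeq E^{\vee}\otimes L$ obtained by unwinding the parabolic dual construction is the \emph{same} map as the adjoint of the restricted form $\varphi$, rather than merely an abstract isomorphism of the correct bundles. This requires tracking the identification \eqref{eqn:underlying-bundle-of-parabolic-dual} carefully and checking compatibility of the parabolic tensor and dual operations with restriction to the degree-zero subsheaf, in the spirit of \cite[Example 3.2]{Yo95}. Once that compatibility is in place, nondegeneracy of $\varphi$ follows immediately from the fact that $\widetilde{\varphi}_*$ is a parabolic isomorphism, and the lemma is complete.
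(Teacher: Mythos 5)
Your proposal follows essentially the same route as the paper, which proves this lemma via the discussion immediately preceding it: restrict $(\varphi_*)_0$ to the subsheaf $E\otimes E\subset (E_*\otimes E_*)_0$, deduce nondegeneracy from the underlying-bundle isomorphism $E\,\simeq\,(E_*^{\vee})_0\otimes L(D)\,\simeq\,E^{\vee}\otimes L$ furnished by \eqref{eqn:underlying-bundle-of-parabolic-dual} (using $0\notin\boldsymbol{\alpha}$), and inherit anti-symmetry from $\varphi_*$. The delicate point you flag---that the isomorphism $E\simeq E^{\vee}\otimes L$ is genuinely the adjoint of the restricted form $\varphi$, which also shows the pairing factors through $L\subset L(D)$---is exactly what the paper disposes of by citing \cite[Example 3.2]{Yo95}, so your account matches the paper's proof.
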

	
	\begin{remark}\label{rem:unquely-determined-bilinear-map}
		Observe that $\varphi_*$ is uniquely determined by $\varphi$ due to the following:
		\begin{align}
			\mc{PH}\it{om}\left(E_*,\,\mc{PH}\it{om}(E_*,\,L(D))_*\right)&\,\subset\,
			\mc{H}\it{om}(E,\, \mc{PH}\it{om}(E_*,\,L(D))_0)\quad \text{\cite[p. 1782]{Bot10}}\nonumber\\
			&=\, \mc{H}\it{om}\left(E,\,\mc{PH}\it{om}(E_*,\,L(D))\right)
			\quad\text{\cite[Definition 3.2]{Yo95}}\nonumber\\
			&\subset\, \mc{H}\it{om}\left(E,\,\mc{H}\it{om}(E,\,L(D))\right)
			\quad\text{\cite[pp. 1782]{Bot10}};\nonumber
		\end{align}
		the last inclusion map sends the parabolic map $\varphi_*$ (seen as a parabolic map $E_*\,\longrightarrow\,
		\mc{PH}\it{om}(E_*,\,L(D))_*$) to the map $\varphi \,:\, E\otimes E\,\longrightarrow\, L\,\subset\, L(D)$.
	\end{remark}
	
	Fix an even positive integer $r$. We shall assume that the partial flags at the parabolic points $p
	\,\in\, D$ are of the following type:
	\begin{align}\label{eqn:condition-on-multiplicities}
		E_p\ &\,=\,E_{_{p,1}}\,\supsetneq\, E_{_{p,2}}\,\supsetneq\, \cdots\,\supsetneq\, E_{_{p,\ell(p)}}	\,\supsetneq\, E_{_{p,\ell(p)+1}} \,=\, 0\nonumber\\
		&\text{such that}\ \ m_{_{p,j}} \,=\, m_{_{p,\ell(p)+1-j}}\ \ \forall\ \ 1\,\leq\, j\,\leq\, \ell(p). 
	\end{align}
	One particular example of such flags are, of course, the full flags.
	
	As a motivation for the type of partial flags that are being considered in this paper, take a symplectic vector space
	$V$ of dimension $2m$ together with a partial flag consisting of isotropic subspaces
	\begin{align*}
		V\,=\, V_1\,\supsetneq\, V_2\,\supsetneq\, V_{3}\,\supsetneq\, \cdots\,
		\supsetneq\, V_{\ell} \,\supsetneq\, V_{\ell+1} \,=\,0.
	\end{align*}
	So $\dim V_2\,\leq\, m$, and hence $\ell\,\leq\, m+1$.	We can always extend such a flag by considering their
	annihilating subspaces:
	\begin{align}\label{eqn:partial-flag}
		V\,=\,V_{\ell+1}^{\perp}\,\supsetneq\, V_{\ell}^{\perp}\,\supsetneq\, V_{\ell-1}^{\perp}\,\supsetneq\, \cdots
		\,\supsetneq\, V_{2}^{\perp}\,\supset\, V_2\,\supsetneq\,
		\cdots\,\supsetneq\, V_3\,\supsetneq\,\cdots\,\supsetneq\, V_{\ell}\,\supsetneq\, V_{\ell+1}\,=\, 0.
	\end{align}
	If $V_2$ is a Lagrangian subspace (i.e., $\dim V_2 \,=\, m$), then $V_2^{\perp}\,=\, V_2$, which forces $\ell$
	in \eqref{eqn:partial-flag} to be \textit{odd}. On the other hand, if
	$V_2$ is not a Lagrangian subspace (i.e., $\dim V_2 \,<\, m$), then $V_2^{\perp}\,\supsetneq\, V_2$, which forces $\ell$ to
	be \textit{even}. In either case, the dimensions of the successive quotients of the resulting flag
	in \eqref{eqn:partial-flag} evidently satisfy conditions similar to \eqref{eqn:condition-on-multiplicities}. 
	
	The next proposition shows that flags of type as in \eqref{eqn:condition-on-multiplicities} 
	induce a certain symmetry on the system of weights as well.
	
	\begin{proposition}\label{prop:isotropic-flag-type-1-and-2-proposition}
		Let $(E_*,\,\varphi_*,\, L(D))$ be a parabolic symplectic vector bundle of rank $r$ such that the
		flags at each parabolic point are of type as in \eqref{eqn:condition-on-multiplicities}. The following
		are satisfied at each $p\,\in\, D$:
		\begin{enumerate}[(i)]
			\item \ \ $\alpha_{_{p,i}} \,=\, 1-\alpha_{_{p,\ell(p)+1-i}}$ \ \ for all $1\,\leq\, i\,\leq\, \ell(p)$.
			
			\item The flag at $E_p$ is isotropic, meaning that $E_{_{p,i}} \,=\, E_{_{p,\ell(p)+2-i}}^{\perp}$ \ \ for
			all $1\,\leq\, i\,\leq\, \ell(p)+1$.
		\end{enumerate}
	\end{proposition}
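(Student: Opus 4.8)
The plan is to read off both symmetry statements from the single defining feature of a parabolic symplectic bundle, namely that the morphism $\widetilde{\varphi}_*\colon E_*\to E_*^{\vee}\otimes L(D)$ of Definition \ref{def:parabolic-symplectic-bundle} is a parabolic \emph{isomorphism}. The entire argument is local at each parabolic point $p\in D$ and reduces to comparing the weighted flag of $E_*$ at $p$ with the weighted flag that $\widetilde{\varphi}_*$ transports back onto it from $E_*^{\vee}\otimes L(D)$.

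First I would write down the parabolic structure of $E_*^{\vee}\otimes L(D)$ at $p$ explicitly. Using the description of the parabolic dual recalled in Section \ref{subsection:filtration-on-dual} (valid because $0\notin\boldsymbol{\alpha}$), together with the fact that $L(D)$ carries the special parabolic structure — so that tensoring by it only twists the underlying bundle and leaves all weights unchanged — the fiber flag of $E_*^{\vee}\otimes L(D)$ at $p$ consists of the subspaces $E'_{p,j}\otimes L(D)_p$ with weights $\alpha'_{p,j}=1-\alpha_{p,\ell(p)+1-j}$, where $E'_{p,j}=(E_p/E_{p,\ell(p)+2-j})^{\vee}\otimes\mathcal{O}(-D)_p$. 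Identifying $(E_p/E_{p,\ell(p)+2-j})^{\vee}$ with the annihilator $\mathrm{Ann}(E_{p,\ell(p)+2-j})$ inside $E_p^{\vee}$, the combined twist $\otimes\mathcal{O}(-D)_p\otimes L(D)_p$ collapses to $\otimes L_p$, so the $j$-th flag subspace becomes $\mathrm{Ann}(E_{p,\ell(p)+2-j})\otimes L_p\subset E_p^{\vee}\otimes L_p$. A short dimension count also gives the dual multiplicities $m'_{p,j}=m_{p,\ell(p)+1-j}$, which is exactly the compatibility with hypothesis \eqref{eqn:condition-on-multiplicities} needed for the two flags to have matching dimensions step by step.

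Next I would identify the underlying map of $\widetilde{\varphi}_*$ at the fiber over $p$ with $\Phi_p\colon E_p\to E_p^{\vee}\otimes L_p$, $v\mapsto\varphi_p(v,\cdot)$, where $\varphi_p$ is the nondegenerate alternating form furnished by Lemma \ref{lem:induced-symplectic-form-on-underlying-bundle}. Now I invoke the fact that a parabolic isomorphism matches weighted flags term by term: regarding $\widetilde{\varphi}_*$ as an isomorphism of the associated filtered sheaves forces $\Phi$ to carry the filtration of $E_*$ onto that of $E_*^{\vee}\otimes L(D)$, and restricting to the fiber at $p$ — where the jumps sit precisely at the weights — yields index by index both $\alpha_{p,i}=\alpha'_{p,i}=1-\alpha_{p,\ell(p)+1-i}$, which is (i), and $\Phi_p(E_{p,i})=\mathrm{Ann}(E_{p,\ell(p)+2-i})\otimes L_p$. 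For the latter, the elementary observation $\Phi_p(v)\in\mathrm{Ann}(W)\otimes L_p\iff\varphi_p(v,w)=0\ \forall\,w\in W\iff v\in W^{\perp}$ converts it into $E_{p,i}=E_{p,\ell(p)+2-i}^{\perp}$, which is (ii); the boundary cases $i=1$ and $i=\ell(p)+1$ reduce to $E_p^{\perp}=0$ and $0^{\perp}=E_p$ by nondegeneracy.

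The routine parts are the linear algebra of the last paragraph and the bookkeeping of the twists and index reversals in the dual construction. The main obstacle I anticipate is the step asserting that a parabolic isomorphism matches the two weighted flags coordinate by coordinate: I would justify it carefully through the filtered-sheaf dictionary of Section \ref{subsection:filtered-sheaves}, verifying that $\Phi(E_t)=(E_*^{\vee}\otimes L(D))_t$ for all $t$ and that the multiplicity symmetry \eqref{eqn:condition-on-multiplicities} forces the $i$-th jump of one flag to align with the $i$-th jump of the other, so that the comparison of weights and of subspaces may legitimately be carried out index by index.
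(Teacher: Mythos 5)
Your proof is correct, but it takes a genuinely different route from the paper. The paper never invokes exact filtration matching: it works fiberwise with Definition \ref{def:parabolic-morphism} directly, first deriving $\alpha_{_{p,i}}\leq 1-\alpha_{_{p,\ell(p)+1-i}}$ from the parabolicity of $\widetilde{\varphi}$ via a dimension count that consumes the multiplicity symmetry \eqref{eqn:condition-on-multiplicities}, then deriving the reverse inequality from the parabolicity of $\widetilde{\varphi}^{-1}$ by a second dimension count, and only afterwards proving (ii) by showing the two sides of $\widetilde{\varphi}_{_{p}}(E_{_{p,i}})\subset \Hom\bigl(E_{_{p}}/E_{_{p,\ell(p)+2-i}},\,L_{_{p}}\bigr)$ have equal dimension and using injectivity. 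Your argument instead extracts both (i) and (ii) at once from the equality $\Phi(E_t)=(E_*^{\vee}\otimes L(D))_t$ for all $t$, which follows from the two one-sided inclusions given by filtration-preservation of $\widetilde{\varphi}_*$ and of its inverse --- the same two-sided mechanism the paper uses (note its proof explicitly says ``since $(\varphi)^{-1}$ is also a parabolic morphism''), but packaged structurally rather than as inequality bookkeeping. Two remarks. First, your anticipated obstacle is misplaced: once $\Phi(E_t)=(E_*^{\vee}\otimes L(D))_t$ holds, the jump sets of the two filtrations coincide and the index-by-index alignment of weights and flag subspaces is automatic; the multiplicity symmetry is not needed for alignment --- indeed your argument \emph{derives} $m_{_{p,i}}=m_{_{p,\ell(p)+1-i}}$ as a consequence, so your proof is in this respect more general than the paper's, which genuinely uses \eqref{eqn:condition-on-multiplicities} as input for (i). Second, your route does lean on two points that should be made explicit: that ``isomorphism of parabolic bundles'' in Definition \ref{def:parabolic-symplectic-bundle} means the inverse is again a parabolic morphism (a one-sided parabolic sheaf isomorphism does \emph{not} force $\Phi(E_t)\supset(E_*^{\vee}\otimes L(D))_t$, as a rank-one example with weights $0.3$ and $0.5$ shows), and that the dictionary of Section \ref{subsection:filtered-sheaves} between Definition \ref{def:parabolic-morphism} and filtration-preserving maps is applied with the paper's strict-inequality convention. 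What each approach buys: the paper's is elementary, self-contained at the level of fibers, and makes visible exactly where each hypothesis enters; yours is shorter, conceptually cleaner, and exposes that the weight and multiplicity symmetries are forced by the symplectic isomorphism rather than independent constraints.
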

	
	\begin{proof}
		First the parabolic structure on $E^{\vee}_*\otimes L(D)$ will be described. Recall that the underlying
		vector bundle for $E^{\vee}_*$ is given by $E^{\vee}\otimes \mc{O}(-D)$\ (cf.
		\eqref{eqn:underlying-bundle-of-parabolic-dual}), and thus the underlying vector bundle
		for $E^{\vee}_*\otimes L(D)$ is given by $E^{\vee}\otimes L$. From the discussion in
		Section \ref{subsection:filtration-on-dual} it follows that at each parabolic point $p\,\in \,D$, the weighted
		flag for $E_{_{p}}^{\vee}\otimes L_{_{p}}$ is as follows:
		\begin{align}\label{eqn:dual-filtration}
			E^{\vee}_{_{p}}\otimes L_{_{p}} \ = \ E'_{_{p,1}}\ &\supsetneq\ E'_{_{p,2}}\ \supsetneq\ \cdots \ \supsetneq\ E'_{_{p,\ell(p)}}\ \supsetneq\ E'_{_{p,\ell(p)+1}} \ = \ 0\\
			\alpha'_{_{p,1}}\ &<\ \alpha'_{_{p,2}}\ <\ \cdots\ <\ \alpha'_{_{p,\ell(p)}}\ <\ \alpha'_{_{p,\ell(p)+1}} \,:=\, 1,
		\end{align}
		where $E'_{_{p,j}} \,:=\, \Hom\left(\dfrac{E_{_p}}{E_{_{p,\ell(p)+2-j}}},\ L_{_{p}}\right)$,\ and \
		$\alpha'_{_{p,j}} \,:=\, 1-\alpha_{_{p,\ell(p)+1-j}} \ \ \forall\ 1\ \leq\ j \ \leq \ \ell(p)+1$. 
		
		Proof of (i):\, Take $p\,\in\, D$. From the description of the parabolic structure on $E^{\vee}_*\otimes L(D)$
		in \eqref{eqn:dual-filtration},it follows that as $\varphi$ is a parabolic morphism,
		$\widetilde{\varphi}_p \,:\, E_p\,\longrightarrow\, E_p^{\vee}\otimes L_p$
		satisfies the property
		\begin{align}
			\widetilde{\varphi}_p(E_{_{p,i}})\,\subset\, E'_{_{p,\ell(p)+2-j}}\ =\ \Hom\left(\dfrac{E_{_{p}}}{E_{_{p,j}}},\,
			L_{_{p}}\right)\label{ez}
		\end{align}
		whenever $\alpha_{_{p,i}} \,>\, \alpha'_{_{p,\ell(p)+1-j}}\ =\ 1-\alpha_{_{p,j}}$\ (see Definition \ref{def:parabolic-morphism}). It follows that $\dim E_{_{p,i}} \,\leq\, r-\dim E_{_{p,j}}$ whenever $\alpha_{_{p,i}} \,>\, 1-\alpha_{_{p,j}}$. 
		
		Computing the dimension of both sides of \eqref{ez},
		\begin{align*}
			\sum_{s = i}^{\ell(p)}m_{_{p,s}}\ \leq\ r- \sum_{t=j}^{\ell(p)}m_{_{p,t}}\ 
			=\ \sum_{t=1}^{j-1}m_{_{p,t}}
		\end{align*}
		because $\sum m_{_{p,i}} \,=\,r$. This implies that
		\begin{align*}
			\sum_{s =1}^{\ell(p)+1 -i}m_{_{p,s}}\ \leq\ \sum_{t=1}^{j-1}m_{_{p,t}}
		\end{align*}
		because $m_{_{p,s}} \,=\, m_{_{p,\ell(p)+1-s}}$ for all $s$. Thus $\ell(p)+1-i\,\leq\, j-1$,\ 
		so that\ $i\,\geq\, \ell(p)+2-j$.
		
		As the parabolic weights form an increasing sequence, this implies that
		\begin{equation}\label{y1}
			\alpha_{_{p,i}}\ \,\geq\ \, \alpha_{_{p,\ell(p)+2-j}}
		\end{equation}
		whenever $\alpha_{_{p,i}} \,>\, 1-\alpha_{_{p,j}}$. Hence, if $$\alpha_{_{p,i}}\,>\,
		1-\alpha_{_{p,\ell(p)+1-i}}$$ for some $i$, setting
		$j\,=\,\ell(p)+1-i$ in \eqref{y1} \ it is deduced that
		$$\alpha_{_{p,i}}\,\geq\, \alpha_{_{p,\ell(p)+2-(\ell(p)+1-i)}} \,=\, \alpha_{_{p,i+1}},$$
		which is a contradiction.
		
		Therefore, it is deduced that
		\begin{align}\label{eqn:weight-relation-2}
			\alpha_{_{p,i}}\ \leq\ 1-\alpha_{_{p,\ell(p)+1-i}}
		\end{align}
		for all $i$.
		
		On the other hand, since $(\varphi)^{-1}$ is also a parabolic morphism, again
		using \eqref{eqn:dual-filtration} it follows that
		\begin{align*}
			(\widetilde{\varphi}_p)^{-1}\left(E'_{_{p,\ell(p)+1-j}}\right)\ =\ (\widetilde{\varphi}_p)^{-1} \left(\Hom\left(\dfrac{E_{_p}}{E_{_{p,j+1}}},\,L_{_p}\right)\right)\ \,\subset\ \, E_{_{p,i+1}}
		\end{align*}
		whenever $1-\alpha_{_{p,j}}\ =\ \alpha'_{_{p,\ell(p)+1-j}}\,> \, \alpha_{_{p,i}}$.
		Once more, computing the dimension of both sides it follows that
		\begin{align*}
			r-\sum_{s=j+1}^{\ell(p)+1-1}m_{_{p,s}}\ \leq \ \sum_{t = i+1}^{\ell(p)}m_{_{p,t}}.
		\end{align*}
		This implies that $\sum_{s=1}^{j}m_{_{p,s}}\ \leq\ \sum_{t =1}^{\ell(p)-i}m_{_{p,t}}$,
		because $m_{_{p,t}} \,=\, m_{_{p,\ell(p)+1-t}}$ for all $t$,
		and hence it follows that $j\,\leq\, \ell(p)-i$.
		
		As the parabolic weights form an increasing sequence, this implies that
		\begin{equation}\label{y2}
			\alpha_{_{p,j}}\ \leq\ \alpha_{_{p,\ell(p)-i}}
		\end{equation}
		whenever $1-\alpha_{_{p,j}}\,>\, \alpha_{_{p,i}}$.
		Hence, if $\alpha_{_{p,\ell(p)+1-i}}\, < \, 1-\alpha_{_{p,i}}$ for some $i$, then \eqref{y2} implies that 
		\begin{align*}
			\alpha_{_{p,i}}\ \leq\ \alpha_{_{p,\ell(p)-(\ell(p)+1-i)}} \ =\ \alpha_{_{p,i-1}},
		\end{align*}
		which is again a contradiction.
		This, combined with \eqref{eqn:weight-relation-2}, implies that $\alpha_{_{p,i}} = 1-\alpha_{_{p,\ell(p)+1-i}}$
		for all $1\,\leq\, i\,\leq\, \ell(p).$
		
		Proof of (ii):\, For each $i$, we have \ $\alpha_{_{p,i}}\ >\ \alpha_{_{p,i-1}} \ =\
		1-\alpha_{_{p,\ell(p)+2-i}}$\ \ by (i). Thus, the parabolic
		morphism $\widetilde{\varphi}$ satisfies the condition
		\begin{equation}\label{y3}
			\widetilde{\varphi}_{_{p}}(E_{_{p,i}})\ \ \subset\ \
			\Hom\left(\dfrac{E_{_{p}}}{E_{_{p,\ell(p)+2-i}}},\ \, L_{_{p}}\right)
		\end{equation}
		for all $i$ (see Section \ref{subsection:filtration-on-dual}).
		It is easy to check using \eqref{eqn:condition-on-multiplicities} that the two sides of \eqref{y3} have a common
		dimension, namely $\sum_{\ell=i}^{\ell(p)}m_{_{p,\ell}}$. Since $\widetilde{\varphi}_p$ is injective,
		it follows that
		\begin{align*}
			\widetilde{\varphi}_{_{p}}\ :\ E_{_{p,i}}\ \, \stackrel{\simeq}{\longrightarrow}\ \, \Hom\left(\dfrac{E_{_{p}}}{E_{_{p,\ell(p)+2-i}}},
			\,\,L_p\right),
		\end{align*}
		and thus $E_{_{p,i}}\, =\, E_{_{p,\ell(p)+2-i}}^{\perp}$. This completes the proof.
	\end{proof}
	
	Proposition \ref{prop:isotropic-flag-type-1-and-2-proposition} prompts the following definition.
	
	\begin{definition}\label{def:weighted-flag-of-symmetric-type}
		Let $r$ be a positive even integer. Fix a finite set of points $D$ on the curve $X$ and a subset
		of positive integers $\{\ell(p)\}_{p\in D}$ \ satisfying the condition $\ell(p)\ \leq\ r$ for all\ $p\,\in\, D$.
		Suppose that
		\begin{align*}
			\boldsymbol{m} \,\,=\,\, \left\{(m_{_{p,1}},\, m_{_{p,2}},\, \cdots,\, m_{_{p,\ell(p)}})_{p\in D}\right\},\ \
			\boldsymbol{\alpha}\,\,=\,\,\left\{\left(\alpha_{_{p,1}}\,<\,\alpha_{_{p,2}}\,<\,
			\cdots\,<\,\alpha_{_{p,\ell(p)}}\right)_{p\in D}\right\}
		\end{align*}
		are a system of multiplicities and weights on points of $D$ respectively (thus $\sum_{i=1}^{\ell(p)}m_{_{p,i}}
		\,=\, r$ for all $p\,\in\, D)$.
		\begin{itemize}
			\item We shall say that $\boldsymbol{m}$ is of \textit{symmetric type}, if $m_{_{p,j}} \,=\,
			m_{_{p,\ell(p)+1-j}}$ for all $p\,\in\, D$ and $1\,\leq\, j\,\leq\, \ell(p)$.
			
			\item We shall say that $\boldsymbol{\alpha}$ is of \textit{symmetric type}, if $\alpha_{_{p,j}}
			\,=\, 1-\alpha_{_{p,\ell(p)+1-j}}$ for all $p\,\in\, D$ and $1 \, \leq\, j \, \leq \, \ell(p)$.
		\end{itemize}
	\end{definition}
	
	\begin{proposition}\label{prop:symplectic-parabolic-implies-isotropic-flag-partial-flag-case}
		Let $(E,\,\varphi,\, L)$ be a symplectic vector bundle of rank $r$. Let $$\{E_{p,\bullet},\,\,
		\alpha_{p,\bullet}\,=\,\left(\alpha_{_{p,1}}\,<\,\alpha_{_{p,2}}\,<\,\cdots\,<\,\alpha_{_{p,\ell(p)}}\right)\}_{p\in D}$$
		be a system of weighted flags such that both the resulting system of multiplicities and weights are of symmetric type (see Definition \ref{def:weighted-flag-of-symmetric-type}).
		Consider the resulting parabolic bundle $E_*$. Then $\varphi$ produces a parabolic symplectic bundle $(E_*,\,\varphi_*,\, L(D))$
		if and only if the flag $\{E_{p,\bullet}\}_{p\in D}$ is isotropic with respect to $\varphi_p$ at each
		$p\,\in\, D$, meaning $E_{_{p,i}}\, =\, E_{_{p,\ell(p)+2-i}}^{\perp}$ for all $1\,\leq\, i\,\leq\, \ell(p)+1$.
	\end{proposition}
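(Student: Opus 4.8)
The plan is to reduce the statement to a fiberwise comparison of parabolic structures, exploiting that the map $\widetilde{\varphi}\colon E\to E^\vee\otimes L$ induced by the nondegenerate form $\varphi$ is an isomorphism of the underlying bundles. By Definition \ref{def:parabolic-symplectic-bundle} together with Remark \ref{rem:unquely-determined-bilinear-map}, the candidate $\varphi_*$ is already uniquely pinned down by $\varphi$, and the triple $(E_*,\varphi_*,L(D))$ is a parabolic symplectic bundle precisely when the induced map $\widetilde{\varphi}_*\colon E_*\to E_*^\vee\otimes L(D)$ is an isomorphism of parabolic bundles, the antisymmetry of $\varphi_*$ being inherited from that of $\varphi$. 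Via the parabolic tensor-hom adjunction (cf. Section \ref{subsection:filtration-on-dual} and \cite{Yo95}), $\varphi_*$ is a parabolic morphism $E_*\otimes E_*\to L(D)$ if and only if $\widetilde{\varphi}_*$ is a parabolic morphism, so the entire question is governed by $\widetilde{\varphi}_*$.

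First I would record the parabolic structure on the target $E_*^\vee\otimes L(D)$, which is already computed in the proof of Proposition \ref{prop:isotropic-flag-type-1-and-2-proposition}: its underlying bundle is $E^\vee\otimes L$, its flag at $p$ is $E'_{p,j}=\Hom(E_p/E_{p,\ell(p)+2-j},L_p)$, and its weights are $\alpha'_{p,j}=1-\alpha_{p,\ell(p)+1-j}$. The key leverage from the hypothesis is that $\boldsymbol{\alpha}$ is of symmetric type, which forces $\alpha'_{p,j}=\alpha_{p,j}$: the source $E_*$ and the target $E_*^\vee\otimes L(D)$ carry identical weights at every $p\in D$. Consequently the parabolic-morphism condition of Definition \ref{def:parabolic-morphism}, applied to $\widetilde{\varphi}$ with matching weights, collapses (taking the index $j=i-1$, which is the binding case since the flag is decreasing) to the single requirement $\widetilde{\varphi}_p(E_{p,i})\subset E'_{p,i}$ for all $i$.

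The next step is to translate this inclusion into the isotropy condition. Since $\widetilde{\varphi}_p(v)$ is the functional $w\mapsto\varphi_p(v,w)$ and $E'_{p,i}=\Hom(E_p/E_{p,\ell(p)+2-i},L_p)$ consists of those functionals vanishing on $E_{p,\ell(p)+2-i}$, the inclusion $\widetilde{\varphi}_p(E_{p,i})\subset E'_{p,i}$ says exactly that $\varphi_p(E_{p,i},E_{p,\ell(p)+2-i})=0$, i.e. $E_{p,i}\subset E_{p,\ell(p)+2-i}^\perp$. A dimension count using that $\boldsymbol{m}$ is of symmetric type then gives $\dim E_{p,i}=\sum_{s=i}^{\ell(p)}m_{p,s}=\dim E_{p,\ell(p)+2-i}^\perp=\dim E'_{p,i}$, so the inclusion is forced to be the equality $E_{p,i}=E_{p,\ell(p)+2-i}^\perp$, which is the stated isotropy, and it simultaneously upgrades $\widetilde{\varphi}_p(E_{p,i})\subset E'_{p,i}$ to $\widetilde{\varphi}_p(E_{p,i})=E'_{p,i}$ by injectivity of $\widetilde{\varphi}_p$ and equal dimensions, making $\widetilde{\varphi}_*$ a parabolic isomorphism.

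Both implications then follow by reading this chain in the two directions: if the triple is parabolic symplectic then $\widetilde{\varphi}_*$ is in particular a parabolic morphism, so the inclusion holds and the dimension count yields isotropy; conversely isotropy gives the inclusion, hence $\widetilde{\varphi}_*$ is a parabolic morphism, and the dimension count promotes it to a parabolic isomorphism, so $(E_*,\varphi_*,L(D))$ is parabolic symplectic. I expect the main obstacle to be bookkeeping rather than conceptual: correctly matching the strict-inequality and the index shift in Definition \ref{def:parabolic-morphism} with the $\ell(p)+2-i$ reindexing of the dual flag, and being careful that the symmetric-type hypotheses on \emph{both} $\boldsymbol{\alpha}$ and $\boldsymbol{m}$ are what simultaneously align the weights and close the inclusion-to-equality gap; one should also make explicit the adjunction that passes between $\varphi_*$ and $\widetilde{\varphi}_*$ at the parabolic level.
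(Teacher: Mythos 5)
Your proposal is correct and follows essentially the same route as the paper: both rest on the dual-flag description $E'_{p,j}=\Hom\left(E_p/E_{p,\ell(p)+2-j},\,L_p\right)$ with weights $\alpha'_{p,j}=1-\alpha_{p,\ell(p)+1-j}$, an index comparison via Definition \ref{def:parabolic-morphism}, the translation of $\widetilde{\varphi}_p(E_{p,i})\subset E'_{p,i}$ into $\varphi_p(E_{p,i},E_{p,\ell(p)+2-i})=0$, and the dimension count from the symmetric multiplicities plus injectivity of $\widetilde{\varphi}_p$ to upgrade inclusion to equality. The only cosmetic difference is that the paper delegates the forward implication to Proposition \ref{prop:isotropic-flag-type-1-and-2-proposition} (proved there under the weaker hypothesis of symmetric multiplicities alone), while you re-derive it inline using the assumed weight symmetry to make the source and target weights coincide and collapse the parabolic-morphism condition to plain flag preservation at matching indices --- an equivalent, slightly streamlined packaging.
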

	
	\begin{proof}\mbox{}
		(1)\, $(\implies)$:\ This follows from Proposition \ref{prop:isotropic-flag-type-1-and-2-proposition}.
		
		(2)\, $(\impliedby)$:\ Using notation in \eqref{eqn:dual-filtration} and Definition \ref{def:parabolic-morphism},
		we need to check that the following implication holds:
		\begin{equation}\label{p2}
			\left(\alpha_{_{p,i}} \ >\ \alpha'_{_{p,j}}\ =\ 1-\alpha_{_{p,\ell(p)+1-j}}\right)\ \, \implies\ \,
			\left(\widetilde{\varphi}_{_{p}}(E_{_{p,i}})\ \subset\ E'_{_{p,j+1}}\ =\ 
			\Hom\left(\dfrac{E_{_{p}}}{E_{_{p,\ell(p)+1-j}}},\,L_{_p}\right)\right).
		\end{equation}
		Assume that $\alpha_{_{p,i}}\ > \ 1-\alpha_{_{p,\ell(p)+1-j}}$ for some indices $i,\,j$. From the assumption
		it follows that $1-\alpha_{_{p,\ell(p)+1-j}} \,=\, \alpha_{_{p,j}}$. This implies that $\alpha_{_{p,i}}\ > \ 
		\alpha_{_{p,j}}$, and
		thus $i\, >\, j$, as the weights form an increasing sequence. Hence $i\,\geq\, j+1$, and thus
		$\ell(p)+1-j\ \geq\ \ell(p)+2-i$, which in turn implies
		that $E_{_{p,\ell(p)+1-j}}\,\subseteq\, E_{_{p,\ell(p)+2-i}}$. Therefore,
		$$\Hom\left(\dfrac{E_{_{p}}}{E_{_{p,\ell(p)+2-i}}},\,\,L_{_{p}}\right)\ \, \subseteq\ \,
		\Hom\left(\dfrac{E_{_{p}}}{E_{_{p,\ell(p)+1-j}}},\,\,L_{_{p}}\right).$$
		Now, by assumption the flag at each $p\,\in\, D$ is isotropic, which implies that
		$E_{_{p,i}} \,=\, E_{_{p,\ell(p)+2-i}}^{\perp}$. Following the same notation as in \eqref{eqn:dual-filtration},
		this implies that
		\begin{align*}
			\widetilde{\varphi}_{_{p}}(E_{_{p,i}})=\Hom\left(\dfrac{E_{_{p}}}{E_{_{p,\ell(p)+2-i}}},\,L_{_{p}}\right)
			\ \subseteq\ \Hom\left(\dfrac{E_{_{p}}}{E_{_{p,\ell(p)+1-j}}},\,L_{_{p}}\right)
			\ =\ E'_{_{p,j+1}}
		\end{align*}
		whenever $\alpha_{_{p,i}} \ >\ \alpha'_{_{p,j}}$.
		Consequently, \eqref{p2} holds. Therefore, $\varphi$ produces a parabolic symplectic bundle
		$(E_*,\,\varphi_*,\,L(D))$ if the flag $\{E_{_{p,\bullet}}\}_{p\in D}$
		is isotropic with respect to $\varphi_p$ at each $p\,\in\, D$. This completes the proof of the proposition.
	\end{proof}
	
	\begin{lemma}\label{lem:symmetric-weight-estimates-partial-flag-case}
		Let $r$ be a positive even integer. Fix a finite subset of points $D$ of cardinality $n$ on the curve $X$. Let
		$\boldsymbol{m}$ be a system of multiplicities, and let $\boldsymbol{\alpha}$ be a system of weights on
		those points, so that both $\boldsymbol{m}$ and $\boldsymbol{\alpha}$ are of symmetric
		type (see Definition \ref{def:weighted-flag-of-symmetric-type}). Also, fix a positive integer $r'\, <\, r$
		and the following set of data for each $p\,\in\, D$: 
		\begin{itemize}
			\item a set of positive integers $\{\ell(p)\}_{p\in D}$ satisfying $\ell(p)\, \leq\, r$,
			\item a subset $I'(p)\,\subset\, \{1,\,2,\,\cdots,\,\ell(p)\}$, and
			\item a set of positive integers 
			$\{\ m'_{_{p,i}}\,\, \mid\, \ p\,\in\, D,\ i\,\in\, I'(p),\ m'_{_{p,i}}\,\leq\, m_{_{p,i}}\
			\forall\ i\,\in\, I'(p)\}$ satisfying $\sum_{j\in I'(p)}m'_{_{p,j}} \,=\, r'$.
		\end{itemize}
		Then the following equations hold:
		\begin{enumerate}[(i)]
			\item\hfill$\begin{aligned}[t]
				\sum_{i=1}^{\ell(p)}m_{_{p,i}}\alpha_{_{p,i}}\ =\ \frac{r}{2},
			\end{aligned}$\hfill\null\\
			\item\hfill$\begin{aligned}[t]
				\left|\frac{1}{r}\left(\sum_{p\in D}\sum_{i=1}^{\ell(p)}m_{_{p,i}}\alpha_{_{p,i}}\right)
				- \frac{1}{r'}\left(\sum_{p\in D}\sum_{j\in I'(p)}m'_{_{p,j}}\alpha_{_{p,j}}\right)\right|
				\ <\ \sum_{p\in D}\left(\frac{1}{2}-\alpha_{_{p,1}}\right).
			\end{aligned}$\hfill\null
		\end{enumerate}
		\textnormal{(Here, note that the condition} $\alpha_{_{p,1}}\,<\, \frac{1}{2}$ \ \textnormal{is ensured by
			the conditions}	\ 
		$\alpha_{_{p,\ell(p)}}\,=\,1-\alpha_{_{p,1}}$\ \ \textnormal{and}\ \
		$\alpha_{_{p,\ell(p)}}\, > \, \alpha_{_{p,1}}$\textnormal{.)}
	\end{lemma}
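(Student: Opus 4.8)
The plan is to dispatch (i) directly from the two symmetry conditions and then build (ii) as an averaging estimate on top of it. For (i), write $S := \sum_{i=1}^{\ell(p)} m_{p,i}\alpha_{p,i}$ and reindex the sum by the involution $i \mapsto \ell(p)+1-i$ of $\{1,\dots,\ell(p)\}$; since this is a bijection the value is unchanged, so $S = \sum_{i=1}^{\ell(p)} m_{p,\ell(p)+1-i}\,\alpha_{p,\ell(p)+1-i}$. Substituting the multiplicity symmetry $m_{p,\ell(p)+1-i} = m_{p,i}$ and the weight symmetry $\alpha_{p,\ell(p)+1-i} = 1-\alpha_{p,i}$ turns this into $S = \sum_{i} m_{p,i}(1-\alpha_{p,i}) = r - S$, using $\sum_i m_{p,i} = r$. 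Hence $2S = r$, which is exactly (i).

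For (ii), I would first feed (i) into the first term, which collapses to $\frac1r\sum_{p\in D}\frac r2 = \frac n2 = \sum_{p\in D}\frac12$. Setting $a_p := \frac1{r'}\sum_{j\in I'(p)} m'_{p,j}\alpha_{p,j}$, the normalization $\sum_{j\in I'(p)} m'_{p,j} = r'$ together with $m'_{p,j} > 0$ presents $a_p$ as a convex combination of the weights $\{\alpha_{p,j} : j\in I'(p)\}$, so $a_p$ lies in $[\alpha_{p,1},\,\alpha_{p,\ell(p)}] = [\alpha_{p,1},\,1-\alpha_{p,1}]$, the last equality being the weight symmetry at $j=\ell(p)$. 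Therefore $|\tfrac12 - a_p| \le \tfrac12 - \alpha_{p,1}$ for every $p$. Since the quantity to be estimated is precisely $|\sum_{p\in D}(\tfrac12 - a_p)|$, the triangle inequality yields the stated bound, but a priori only with $\le$ in place of $<$.

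The step I expect to be the main obstacle is upgrading $\le$ to the strict inequality $<$. The per-point estimate $|\tfrac12 - a_p| \le \tfrac12 - \alpha_{p,1}$ is an equality exactly when $a_p$ reaches an endpoint of $[\alpha_{p,1},1-\alpha_{p,1}]$, which by the strict monotonicity $\alpha_{p,1} < \alpha_{p,i} < \alpha_{p,\ell(p)}$ for interior $i$ occurs only if $I'(p)$ is the single index $\{1\}$ or the single index $\{\ell(p)\}$. Hence strictness can fail only in the degenerate configuration in which every $I'(p)$ is concentrated at one common extreme weight; as a purely combinatorial statement this case genuinely allows equality, so the strict inequality must rely on how the data $(I'(p), m'_{p,j})$ actually arises — namely as the induced flag of a proper isotropic sub-bundle of rank $r' < r$. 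Verifying that this origin excludes the fully-extremal configuration (equivalently, that $\sum_{p}\sum_{j\in I'(p)} m'_{p,j}\alpha_{p,j}$ equals neither $r'\sum_p\alpha_{p,1}$ nor $r'\sum_p(1-\alpha_{p,1})$) is the point I expect to demand the most care, and is where I would focus the remaining work.
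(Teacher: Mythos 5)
Your part (i) is exactly the paper's argument: reindex by $i\mapsto \ell(p)+1-i$, use both symmetries, and solve $\theta_p=r-\theta_p$. For the bound in (ii), your convexity framing is also the paper's argument in disguise: after (i) collapses the first term to $n/2$, the paper runs two separate chains of inequalities, bounding each $\alpha_{p,j}$ below by $\alpha_{p,1}$ and above by $\alpha_{p,\ell(p)}=1-\alpha_{p,1}$, which is precisely your statement that $a_p:=\frac{1}{r'}\sum_{j\in I'(p)}m'_{p,j}\alpha_{p,j}$ lies in $[\alpha_{p,1},\,1-\alpha_{p,1}]$. Up to that point the two proofs coincide in substance, with yours slightly tidier.

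The obstacle you flag at the end is genuine, and you should know that the paper does not resolve it: its displayed strict inequalities amount to asserting $\sum_{j\in I'(p)}m'_{p,j}\alpha_{p,j}>r'\alpha_{p,1}$ (and dually $<r'\alpha_{p,\ell(p)}$) with no justification, and these fail exactly in the extremal configurations you identified. The data $I'(p)=\{1\}$, $m'_{p,1}=r'$ for every $p$ is admissible under the lemma's hypotheses whenever $r'\leq m_{p,1}$, and then both sides of (ii) are equal, so the lemma is only true with $\leq$; your analysis of the equality cases ($a_p$ at an endpoint for every $p$, the same endpoint throughout) is correct. However, your proposed escape route --- that the geometric origin of the data excludes the extremal case --- does not work: in the application (Lemma \ref{lem:concentrated-weight-parabolic-semistable-implies-semistable}) the induced flag of an isotropic sub-bundle $F$ whose fibre at each $p$ satisfies $F_p\cap E_{p,2}=0$ has exactly $I_F(p)=\{1\}$ with multiplicity $r_F$, so the degenerate configuration genuinely arises from sub-bundles. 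The correct repair is simply to weaken (ii) to $\leq$, which costs nothing downstream: in the proof of Lemma \ref{lem:concentrated-weight-parabolic-semistable-implies-semistable} the needed strictness is supplied by the concentratedness hypothesis $\sum_{p\in D}\bigl(\frac{1}{2}-\alpha_{p,1}\bigr)<\frac{1}{r^2}$ together with $\frac{rr_F}{r^2}=\frac{r_F}{r}<1$, so the conclusions $r\deg(F)-r_F\deg(E)<1$ and the two-sided bound by $1$ survive unchanged. In short: do not spend further effort proving the strict inequality --- it is false as stated --- and record the weak inequality instead.
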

	
	\begin{proof}
		Proof of (i):\ Denote $\theta_p \,:=\, \sum_{i=1}^{\ell(p)} m_{_{p,i}}\alpha_{_{p,i}}$. We have
		$\alpha_{_{p,i}} \,=\, 1-\alpha_{_{p,\ell(p)+1-i}}$ and $m_{_{p,i}} \,=\, m_{_{p,\ell(p)+1-i}}$ by
		the assumption on the weights and multiplicities. Thus it follows that
		$$
		\theta_p \ = \ \sum_{i=1}^{\ell(p)}m_{_{p,i}} \left(1-\alpha_{_{p,\ell(p)+1-i}}\right)
		$$
		$$
		= \, \sum_{i=1}^{\ell(p)}m_{_{p,i}}-\sum_{i=1}^{\ell(p)}m_{_{p,i}}\alpha_{_{p,\ell(p)+1-i}}\,
		= \, r\ -\ \sum_{i=1}^{\ell(p)} m_{_{p,\ell(p)+1-i}}\cdot\alpha_{_{p,\ell(p)+1-i}}\,
		=\, r \ - \ \sum_{j=1}^{\ell(p)} m_{_{p,j}}\alpha_{_{p,j}}= \ r - \theta_p,
		$$
		which implies that $\theta_p \,=\, \dfrac{r}{2}$.
		
		Proof of (ii):\ We have
		$$
		\frac{1}{r}\left(\sum_{p\in D}\sum_{i=1}^{\ell(p)}m_{_{p,i}}\alpha_{_{p,i}}\right) - \frac{1}{r'}
		\left(\sum_{p\in D}\sum_{j\in I'(p)}m'_{_{p,j}}\alpha_{_{p,j}}\right)
		\ \underset{\text{by} \ (i)}{=\joinrel=\joinrel=}\ \ \
		\frac{1}{r}\left(\dfrac{nr}{2}\right) - \dfrac{1}{r'}\left(\sum_{p\in D}\sum_{j\in I'(p)}
		m'_{_{p,j}}\alpha_{_{p,j}}\right)
		$$
		$$
		<\ \frac{n}{2} - \dfrac{1}{r'}\left(\sum_{p\in D}\sum_{j\in I'(p)} m'_{_{p,j}}\alpha_{_{p,1}}\right)\ =\ \dfrac{n}{2} - \dfrac{1}{r'}\left(\sum_{p\in D}r' \alpha_{_{p,1}}\right)\quad\left[\text{since}\ \sum_{j\in I'(p)}m'_{_{p,j}}=r'\right]
		$$
		\begin{equation}\label{eqn:less-than-part-partial-flag-case}
			=\ \frac{n}{2}-\sum_{p\in D}\alpha_{_{p,1}}
			\ =\ \sum_{p\in D} \left(\frac{1}{2}- \alpha_{_{p,1}}\right).
		\end{equation}
		
		On the other hand,
		$$
		\frac{1}{r}\left(\sum_{p\in D}\sum_{i=1}^{\ell(p)}m_{_{p,i}}\alpha_{_{p,i}}\right) -
		\frac{1}{r'}\left(\sum_{p\in D}\sum_{j\in I'(p)}m'_{_{p,j}}\alpha_{_{p,j}}\right)\ \ \underset{\text{by} \
			(i)}{=\joinrel=\joinrel=}\ \ \ \frac{1}{r}\left(\dfrac{nr}{2}\right) - \dfrac{1}{r'}\left(\sum_{p\in D}
		\sum_{j\in I'(p)} m'_{_{p,j}}\alpha_{_{p,j}}\right)
		$$
		$$
		>\ \dfrac{n}{2} - \dfrac{1}{r'}\left(\sum_{p\in D}\sum_{j\in I'(p)} m'_{_{p,j}}\ \alpha_{_{p,\ell(p)}}\right)\ 
		=\ \dfrac{n}{2} - \dfrac{1}{r'}\left(\sum_{p\in D}r'\ \alpha_{_{p,\ell(p)}}\right)\ \
		\left[\text{since}\ \sum_{j\in I'(p)}m'_{_{p,j}}=r'\right]
		$$
		$$
		=\ \frac{n}{2}-\sum_{p\in D}\alpha_{_{p,\ell(p)}}\
		=\ \sum_{p\in D} \left(\frac{1}{2}- \alpha_{_{p,\ell(p)}}\right)\ 
		=\ \sum_{p\in D} \left(\frac{1}{2}-1+ \alpha_{_{p,1}}\right)
		\quad\left[\text{since}\ 1-\alpha_{_{p,1}} \,=\, \alpha_{_{p,\ell(p)}}\right]
		$$
		\begin{equation}\label{eqn:greater-than-part-partial-flag-case}
			=\ \sum_{p\in D}\left(\alpha_{_{p,1}}-\frac{1}{2}\right).
		\end{equation}
		Thus, from \eqref{eqn:less-than-part-partial-flag-case} and \eqref{eqn:greater-than-part-partial-flag-case} we conclude that 
		$$
		\left|\frac{1}{r}\left(\sum_{p\in D}\sum_{i=1}^{r}m_{_{p,i}}\alpha_{_{p,i}}\right) -
		\frac{1}{r'}\left(\sum_{p\in D}\sum_{j\in I'(p)}m'_{_{p,j}}\alpha_{_{p,j}}\right)\right|
		\ <\ \sum_{p\in D}\left(\frac{1}{2}-\alpha_{_{p,1}}\right).
		$$
		This completes the proof.
	\end{proof}
	
	\begin{definition}\label{def:concentrated-weight}
		Let $r$ be a positive even number. Fix a finite subset of points $D$ of $X$ and a set of positive integers
		$\{\ell(p)\}_{p\in D}$ satisfying the condition $\ell(p)\, \leq\, r$ for all $p\,\in\, D$.
		We shall say that a system of weights $$\boldsymbol{\alpha} \
		:=\ \left\{\left(\alpha_{_{p,1}}\,<\,\alpha_{_{p,2}}\,<\,\cdots\,<\,\alpha_{p,\ell(p)}\right)_{p\in D}\right\}$$
		is \textit{concentrated} if it of symmetric type (see Definition \ref{def:weighted-flag-of-symmetric-type}),
		and satisfies the inequality $\underset{p\in D}{\sum} \left(\frac{1}{2}-\alpha_{_{p,1}}\right) \, <\,
		\dfrac{1}{r^2}$. 
	\end{definition}
	
	\begin{lemma}\label{lem:concentrated-weight-parabolic-semistable-implies-semistable}
		Fix a positive even integer $r$, parabolic points $D$ on $X$, and a system of multiplicities $\boldsymbol{m}$
		of symmetric type (see Definition \ref{def:weighted-flag-of-symmetric-type}). Let
		$\boldsymbol{\alpha}$ be a concentrated system of weights (see Definition
		\ref{def:concentrated-weight}) compatible with $\boldsymbol{m}$ in the obvious sense. Then the following
		statements hold:
		\begin{enumerate}[(i)]
			\item If $(E_*,\,\varphi_*,\,L(D))$ is a parabolic symplectic semistable bundle of rank $r$ with system of
			multiplicities $\boldsymbol{m}$ and weights $\boldsymbol{\alpha}$, then the resulting symplectic bundle
			$(E,\varphi,L)$ is symplectic semistable (cf. Lemma \ref{lem:induced-symplectic-form-on-underlying-bundle}).
			
			\item If $(E,\,\varphi,\,L)$ is a symplectic stable bundle of rank $r$, and $\{E_{p,\bullet}\}_{p\in D}$ is a
			system of flags having multiplicities $\boldsymbol{m}$ such that $\{E_{{p,\bullet}}\}_{p\in D}$ is isotropic
			with respect to $\varphi_{_{p}}$ at each $p\,\in \,D$, meaning that $E_{_{p,i}}
			\,=\, E_{_{p,\ell(p)+2-i}}^{\perp}$ for all $1\,\leq\, i\,\leq\, \ell(p)+1$. Then the parabolic bundle
			$(E_*,\,\varphi_*,\,L(D))$ resulting from Proposition
			\ref{prop:symplectic-parabolic-implies-isotropic-flag-partial-flag-case} is parabolic symplectic stable.
		\end{enumerate}
	\end{lemma}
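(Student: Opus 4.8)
The plan is to test both notions of (semi)stability on the same objects --- nontrivial isotropic subbundles $F\subset E$, endowed with the induced parabolic structure $F_*$ --- and to show that the ordinary slope difference $\mu(E)-\mu(F)$ dominates the perturbation coming from the parabolic weights. Writing $r':=\mathrm{rank}(F)$ and letting $\{m'_{p,j}\}_{j\in I'(p)}$ be the multiplicities of the flag induced on $F_p$ at each $p\in D$ (so that $m'_{p,j}\le m_{p,j}$ and $\sum_{j\in I'(p)}m'_{p,j}=r'$), the slope formula \eqref{eqn:parabolic-slope} gives the decomposition
\[
\mu_{par}(E_*)-\mu_{par}(F_*)=\big(\mu(E)-\mu(F)\big)+\Delta,\qquad
\Delta:=\tfrac1r\sum_{p\in D}\sum_{i=1}^{\ell(p)}m_{p,i}\alpha_{p,i}-\tfrac1{r'}\sum_{p\in D}\sum_{j\in I'(p)}m'_{p,j}\alpha_{p,j},
\]
where $\mu(E)=\deg(E)/r$ and $\mu(F)=\deg(F)/r'$ are the ordinary slopes.

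Next I would estimate the two summands separately. By Lemma \ref{lem:symmetric-weight-estimates-partial-flag-case}(ii), $|\Delta|<\sum_{p\in D}\big(\tfrac12-\alpha_{p,1}\big)$, and since $\boldsymbol{\alpha}$ is concentrated (Definition \ref{def:concentrated-weight}) this last quantity is $<1/r^2$. On the other hand $F$ is isotropic, so $r'\le r/2<r$; hence $\mu(E)-\mu(F)=\big(\deg(E)\,r'-\deg(F)\,r\big)/(rr')\in\tfrac1{rr'}\mathbb{Z}$, and whenever it is nonzero one has $|\mu(E)-\mu(F)|\ge \tfrac1{rr'}\ge\tfrac2{r^2}>\tfrac1{r^2}>|\Delta|$. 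Thus the sign of $\mu_{par}(E_*)-\mu_{par}(F_*)$ is forced to agree with that of $\mu(E)-\mu(F)$ whenever the latter does not vanish.

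This immediately yields both statements. For (i) I argue by contraposition: if $(E,\varphi,L)$ were not symplectic semistable there would be a nontrivial isotropic $F$ with $\mu(E)-\mu(F)<0$; by the gap estimate $\mu_{par}(E_*)-\mu_{par}(F_*)=(\mu(E)-\mu(F))+\Delta<0$, so $\mu_{par}(F_*)>\mu_{par}(E_*)$, contradicting the parabolic symplectic semistability of $(E_*,\varphi_*,L(D))$ (note that $F$ is isotropic for the induced form as well). For (ii) I take an arbitrary nontrivial isotropic subbundle $F$; symplectic stability of $(E,\varphi,L)$ gives $\mu(E)-\mu(F)>0$, whence $\mu_{par}(E_*)-\mu_{par}(F_*)>0$, i.e. $\mu_{par}(F_*)<\mu_{par}(E_*)$, so the bundle $(E_*,\varphi_*,L(D))$ produced by Proposition \ref{prop:symplectic-parabolic-implies-isotropic-flag-partial-flag-case} is parabolic symplectic stable.

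The one place demanding care is bookkeeping rather than analysis: I must confirm that the flag induced on an isotropic $F$ really presents its parabolic weights as a sub-collection $\{\alpha_{p,j}\}_{j\in I'(p)}$ of the weights of $E_*$, with multiplicities $m'_{p,j}\le m_{p,j}$ summing to $r'$, so that Lemma \ref{lem:symmetric-weight-estimates-partial-flag-case}(ii) applies verbatim. Once that identification is in place --- together with the automatic bound $r'\le r/2$ coming from isotropy --- the numerical inequality $|\mu(E)-\mu(F)|>|\Delta|$ closes both implications with no further computation.
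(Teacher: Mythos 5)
Your proposal is correct and follows essentially the same route as the paper's proof: both endow an isotropic subbundle $F$ with the induced parabolic structure, apply Lemma \ref{lem:symmetric-weight-estimates-partial-flag-case}(ii) together with the concentration hypothesis to bound the weight perturbation $\Delta$ by $1/r^2$, and then use the integrality of $r\deg(F)-r'\deg(E)$ to separate the ordinary slope difference from that perturbation. Your only cosmetic deviations --- phrasing (i) contrapositively and invoking $r'\le r/2$ to get the margin $2/r^2$ where the paper is content with the weaker bound coming from $r'<r$ --- do not change the argument.
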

	
	\begin{proof}
		The idea of the proof has been inspired by \cite[Proposition 2.6]{AlGo18}.
		
		Proof of\, (i):\ Let $F$ be an isotropic sub-bundle of $E$ of rank $r_{_F}$ (see Definition
		\ref{def:parabolic-symplectic-semistable-bundle}). Consider the parabolic structure induced on $F$ by
		intersecting the flags for $E_p$ with $F_p$ for each $p\,\in\, D$. As a part of this data, at each $p\,\in\, D$
		we get a subset $I_F(p)\,\subset\, \{1,\,2,\,\cdots,\,\ell(p)\}$ consisting of those indices $j$ for which
		${\alpha}_{_{p,j}}$ is a parabolic weight for $F_{_p}$. Let $\boldsymbol{m}_F$ be the system of multiplicities
		induced by $\boldsymbol{m}$ on $F$.
		
		Now, as $(E_*,\,\varphi_*,\,L(D))$ is parabolic semistable (see Definition
		\ref{def:parabolic-symplectic-semistable-bundle}), for each nontrivial isotropic sub-bundle $F$ of $E$ as above,
		$$
		\dfrac{\deg(F)}{r_{_F}} \ - \ \dfrac{\deg(E)}{r} \ \leq\ \frac{1}{r}\left(\sum_{p\in D}\sum_{i=1}^{\ell(p)}
		m_{_{p,i}}\alpha_{_{p,i}}\right)\ -\ \frac{1}{r_{_F}}\left(\sum_{p\in D}\sum_{j\in 
			I_F(p)}({m_{_F}})_{_{p,j}}{\alpha}_{_{p,j}}\right)\quad[\text{cf.}\ \, \eqref{eqn:parabolic-slope}]
		$$
		$$
		<\ \ \sum_{p\in D}\left(\frac{1}{2}-\alpha_{_{p,1}}\right),\ \ \quad\text{by Lemma}\ \
		\ref{lem:concentrated-weight-parabolic-semistable-implies-semistable}
		$$
		$$
		< \ \frac{1}{r^2},\ \ \quad\text{as}\ \ \boldsymbol{\alpha}\ \ \text{is concentrated}.
		$$
		Thus, $$r\deg(F)\ -\ r_{_F}\deg(E)\ <\ \dfrac{rr_{_F}}{r^2}\ <\ 1. $$
		As the left-hand side is an integer, we conclude that 
		$$r\deg(F)\ -\ r_{_F}\deg(E)\ \leq\ 0$$ for every nontrivial isotropic sub-bundle
		$F\,\subset\, E$, so $(E,\varphi,L)$ is a symplectic semistable vector bundle.
		
		Proof of \, (ii):\ Continuing with the same notation as above, given an isotropic flag
		$\{{E_{_{p,\bullet}}}\}_{p\in D}$, the resulting parabolic symplectic vector bundle $(E_*,\,\varphi_*,
		\,L(D))$ is parabolic stable if and only if for every nontrivial isotropic sub-bundle $F\,\subset\, E$ the
		inequality
		\begin{align*}
			\dfrac{\deg(F)+\sum_{p\in D}\sum_{j\in I_F(p)}(m_{_F})_{_{p,j}}\alpha_{_{p,j}}}{r_{_F}}\ < \
			\dfrac{\deg(E)+\sum_{p\in D}\sum_{i=1}^{\ell(p)}m_{_{p,i}}\alpha_{_{p,i}}}{r}
		\end{align*}
		holds, or equivalently, if and only if
		\begin{align*}
			r\deg(F)\ -\ r_{_{F}}\deg(E) \ < \
			\sum_{p\in D}\left(r_{_{F}}\sum_{i=1}^{\ell(p)}m_{_{p,i}}\alpha_{_{p,i}}\ -\ 
			r\sum_{j\in I_F(p)}(m_{_F})_{_{p,j}}\alpha_{_{p,j}}\right).
		\end{align*}
		
		On the other hand, since $(E,\,\varphi,\,L)$ is semistable, every nontrivial isotropic sub-bundle $F
		\,\subset\, E$ yields $r\deg(F) - r_{_{F}}\deg(E) \, < \, 0$, and hence
		\begin{equation}\label{eq-1}
			r\deg(F)\ -\ r_{_{F}}\deg(E) \ \leq \ -1. 
		\end{equation}
		By Lemma \ref{lem:symmetric-weight-estimates-partial-flag-case} and the fact that
		$\boldsymbol{\alpha}$ is concentrated, we get that
		$$
		\left|\sum_{p\in D}\left(r_{_{F}}\sum_{i=1}^{\ell(p)}m_{_{p,i}}\alpha_{_{p,i}}\, -\,
		r\sum_{j\in I_F(p)}(m_{_F})_{_{p,j}}\alpha_{_{p,j}}\right)\right| \ < \
		rr_{_{F}}\sum_{p\in D}\left(\frac{1}{2}\, -\, \alpha_{_{p,1}}\right) \ < \ \dfrac{rr_{_F}}{r^2}\ <\ 1,
		$$
		and hence
		$$
		-1 \ < \ \sum_{p\in D}\left(r_{_{F}}\sum_{i=1}^{\ell(p)}m_{_{p,i}}\alpha_{_{p,i}}\ -\ 
		r\sum_{j\in I_F(p)}(m_{_F})_{_{p,j}}\alpha_{_{p,j}}\right) \ < \ 1.
		$$
		Thus, \eqref{eq-1} implies that 
		\begin{align*}
			r\deg(F)\, -\, r_{_{F}}\deg(E) \ \leq \ -1 \ < \ \sum_{p\in D}\left(r_{_{F}}
			\sum_{i=1}^{\ell(p)}m_{_{p,i}}\alpha_{_{p,i}}\, -\, r\sum_{j\in I_F(p)}(m_{_F})_{_{p,j}}\alpha_{_{p,j}}\right),
		\end{align*}
		and thus $(E_*,\,\varphi_*, \,L(D))$ is symplectic parabolic stable.
	\end{proof}
	
	\section{Brauer group of parabolic symplectic moduli}\label{section:brauer-group-of-parabolic-symplectic-moduli}
	\subsection{The case of concentrated weights}\hfill\\
	
	We finally come to our main goal of computing Brauer groups. Following the
	notation of Definition \ref{def:parabolic-bundles}, fix an even positive integer $r$, a subset $D \,=
	\,\{p_1,\,p_2,\,\cdots,\,p_n\}$ of $n$ points in $X$ and a line bundle $L$ on $X$. Fix a system of multiplicities
	$\boldsymbol{m} \,=\, \{(m_{_{p_i,1}},\,m_{_{p_i,2}},\,\cdots,\,m_{_{p_i,\ell(p_i)}})\, \mid \,\,
	p_i\,\in\, D\}$ of symmetric type (see Definition \ref{def:weighted-flag-of-symmetric-type}). We first consider the case of concentrated system of weights (Definition \ref{def:concentrated-weight}), and consider more general system of weights in the next subsection.
	
	Let $\boldsymbol{\alpha}$ be a concentrated system of weights compatible with $\boldsymbol{m}$ and not 
	containing $0$. Let $\mc{M}^{\boldsymbol{m,\alpha}}_{L(D)}$ denote the moduli space of parabolic symplectic 
	stable bundles $(E_*,\,\varphi_*,\, L(D))$ of rank $r$ on $X$, where $L(D)$ as before has the special
	structure (see Remark \ref{rem:special-structure}). Also, let $\overline{\mc{M}}_L$ denote the moduli space
	of semistable 
	symplectic bundles $(F,\,\psi,\,L)$ of rank $r$ on $X$. The condition that the symplectic form $\varphi_*$ takes 
	values in a fixed line bundle $L(D)$ actually fixes the determinant of $E$, and thus $\overline{\mc{M}}_L$ is 
	the moduli space of twisted semistable $\Sp(r,\bb{C})$-bundles. Let $\mc{M}^{rs}_L$ (respectively, 
	$\mc{M}^s_L$) be the open subset of $\overline{\mc{M}}_L$ consisting of regularly stable symplectic bundles 
	(respectively, stable symplectic bundles). Recall that a symplectic stable vector bundle $(E,\ \varphi,\ L)$ is said to be \textit{regularly stable} if, for any nonzero (meaning not identically zero) $\mc{O}_X$-linear morphism $g : E \ \longrightarrow\ E$ making the diagram 
	\begin{align*}
		\xymatrix{E\otimes E\ar[r]^{\varphi} \ar[d]_{g\otimes g} & L \\
			E\otimes E \ar[ru]_{\varphi}}
	\end{align*} 
	commute, must equal to multiplication by $\pm 1$. We have the chain of inclusions
	$$\mc{M}^{rs}_{L}\ \subset\ \mc{M}^{s}_L\ \subset\ \overline{\mc{M}}_L.$$
	As we have chosen a concentrated system of weights, by Lemma
	\ref{lem:concentrated-weight-parabolic-semistable-implies-semistable} there exists a morphism
	\begin{align}
		\pi_0\ :\ \mc{M}^{\boldsymbol{m,\alpha}}_{L(D)}\ \longrightarrow\ \overline{\mc{M}}_L.
	\end{align}
	Let $V \,:=\, \pi_0^{-1}(\mc{M}^s_L)$, and denote $\pi \,:=\, \pi_0|_V$. We thus have the following diagram: 
	\begin{align}\label{diagram-1}
		\xymatrix{
			V \ar@{^{(}->}[r] \ar[d]_{\pi} & \mc{M}^{\boldsymbol{m,\alpha}}_{L(D)} \ar[d]^{\pi_0}\\
			\mc{M}^s_L \ar@{^{(}->}[r] & \overline{\mc{M}}_L
		}
	\end{align}
	
	\begin{lemma}\label{lem:fibre-bundle}
		The map $\pi$ in \eqref{diagram-1} is a fiber bundle with fibers isomorphic to
		$\prod_{i=1}^{n}\Sp(r,\bb{C})/P_i$, where $\Sp(r,\bb{C})$ denotes the symplectic group, and $P_i$ is a
		parabolic subgroup consisting of block upper-triangular matrices whose blocks are of size
		$(m_{_{p_i,1}},m_{_{p_i,2}},\cdots,m_{_{p_i,\ell(p_i)}})$.
	\end{lemma}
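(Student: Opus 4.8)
The plan is to realize $V$ as the variety of \emph{isotropic flags of the prescribed type} in the bundle $E$, fibered over the stable symplectic moduli $\mc{M}^s_L$, and to identify the fiber over a point $[(E,\varphi,L)]$ with the corresponding flag variety of the group $\Sp(r,\bb{C})$. First I would note that by Lemma \ref{lem:concentrated-weight-parabolic-semistable-implies-semistable}(ii), for $(E,\varphi,L)$ stable and \emph{any} choice of isotropic flag $\{E_{_{p,\bullet}}\}_{p\in D}$ of multiplicity type $\boldsymbol{m}$ (i.e. satisfying $E_{_{p,i}}=E_{_{p,\ell(p)+2-i}}^{\perp}$), the resulting parabolic symplectic bundle $(E_*,\varphi_*,L(D))$ is parabolic symplectic \emph{stable}, hence defines a point of $\mc{M}^{\boldsymbol{m,\alpha}}_{L(D)}$ lying in $V$. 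Conversely, by Proposition \ref{prop:isotropic-flag-type-1-and-2-proposition} every point of $V$ arises this way, with the flag necessarily isotropic. Thus, set-theoretically, the fiber of $\pi$ over $[(E,\varphi,L)]$ is precisely the set of symmetric-type isotropic flags in the fibers $E_p$ ($p\in D$), which is a product over $p\in D$ of isotropic flag varieties of the symplectic vector space $(E_p,\varphi_p)$.

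Next I would identify each factor with a homogeneous space. Fixing at $p$ a symplectic basis of $E_p\cong\bb{C}^r$, the symplectic group $\Sp(r,\bb{C})$ acts transitively on the set of isotropic flags with fixed dimension sequence $(m_{_{p,1}},\cdots,m_{_{p,\ell(p)}})$; the stabilizer of the standard such flag is exactly the parabolic subgroup $P_i$ of block upper-triangular matrices with the prescribed block sizes (this uses the self-duality $E_{_{p,i}}=E_{_{p,\ell(p)+2-i}}^{\perp}$, which is what makes the stabilizer a \emph{symplectic} parabolic rather than merely a $GL_r$-parabolic). Hence each factor is isomorphic to $\Sp(r,\bb{C})/P_i$, giving the claimed fiber $\prod_{i=1}^n \Sp(r,\bb{C})/P_i$.

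The main obstacle is upgrading this fiberwise description to the statement that $\pi$ is a \emph{fiber bundle} in a suitable (étale or analytic local) sense, which requires a globalizing construction over $\mc{M}^s_L$. The clean way is to work on $\mc{M}^s_L$ with the universal symplectic bundle — but since a universal family may only exist étale-locally (the moduli carries automorphisms $\pm 1$), I would instead build, for each $p\in D$, the relative isotropic-flag bundle associated to the evaluation of the universal (or quasi-universal) bundle at $p$. Concretely, over any base $T$ with a family $(\mc{E},\Phi,L)$ of stable symplectic bundles, the pullback of the symplectic form to the fiber $\mc{E}_p$ gives a rank-$r$ symplectic bundle on $T$, and the associated relative isotropic flag scheme $\mathrm{Fl}_{\boldsymbol{m}}^{\mr{isot}}(\mc{E}_p)\to T$ is a Zariski-locally trivial fibration with fiber $\Sp(r,\bb{C})/P_i$, being an étale-locally trivial $\Sp(r,\bb{C})/P_i$-bundle associated to the $\Sp(r,\bb{C})$-frame bundle of $(\mc{E}_p,\Phi_p)$. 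Taking the fiber product over $p\in D$ of these flag bundles produces a family over $T$, and by the universal property of $\mc{M}^{\boldsymbol{m,\alpha}}_{L(D)}$ together with the stability established above, this family is classified by a morphism to $V$; applying this over an étale cover of $\mc{M}^s_L$ that trivializes the frame bundles yields local trivializations of $\pi$. The only technical care needed is to check that the $\pm 1$ automorphism acts compatibly — but $\pm 1\in\Sp(r,\bb{C})$ lies in the center and preserves every isotropic subspace, so it acts trivially on each flag variety $\Sp(r,\bb{C})/P_i$, and the descent of the flag-bundle construction from the frame bundle down to $\mc{M}^s_L$ goes through without obstruction. This gives that $\pi$ is a fiber bundle with the asserted fiber.
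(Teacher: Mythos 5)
Your proposal follows essentially the same route as the paper: the paper's entire proof of this lemma is a one-line citation of Proposition \ref{prop:symplectic-parabolic-implies-isotropic-flag-partial-flag-case} and Lemma \ref{lem:concentrated-weight-parabolic-semistable-implies-semistable}, which is exactly the fiberwise identification you carry out (isotropic flags of type $\boldsymbol{m}$ $\leftrightarrow$ points of the fiber, each such flag giving a \emph{stable} parabolic bundle because the weights are concentrated). Your extra globalization step --- the relative isotropic flag bundle of the (quasi-)universal symplectic bundle evaluated at each $p\in D$, with the observation that the central $\pm 1$ acts trivially on each $\Sp(r,\bb{C})/P_i$ so the construction descends --- is detail the paper leaves implicit, and it matches the paper's intended argument.
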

	
	\begin{proof}
		This follows from Proposition \ref{prop:symplectic-parabolic-implies-isotropic-flag-partial-flag-case} and Lemma \ref{lem:concentrated-weight-parabolic-semistable-implies-semistable}. 
	\end{proof}
	
	Denote $U \,:=\, \pi^{-1}(\mc{M}^{rs}_L)$. Let $(\mc{M}^{\boldsymbol{m,\alpha}}_{L(D)})^{sm}
	\,\subset\, \mc{M}^{\boldsymbol{m,\alpha}}_{L(D)}$ be the smooth locus. As $\pi$ is a fibre bundle with smooth fibres and the base $\mc{M}^{rs}_L$ is a smooth open subset, it follows that $U\,\subset\, (\mc{M}^{\boldsymbol{m,\alpha}}_{L(D)})^{sm}$
	is a smooth open subset.
	
	\begin{lemma}\label{lem:codimension-estimate}
		The following bound on codimension holds:
		$$\codim_{_{\left(\mc{M}^{\boldsymbol{m,\alpha}}_{L(D)}\right)^{sm}}}
		\left((\mc{M}^{\boldsymbol{m,\alpha}}_{L(D)})^{sm}\setminus U\right)\ \geq\ 2.$$
	\end{lemma}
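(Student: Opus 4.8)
The goal is to show that the complement of $U$ inside the smooth locus $(\mc{M}^{\boldsymbol{m,\alpha}}_{L(D)})^{sm}$ has codimension at least $2$. The plan is to exploit the fiber bundle structure from Lemma \ref{lem:fibre-bundle}. Since $\pi : V \longrightarrow \mc{M}^s_L$ is a fiber bundle with fiber $\prod_{i=1}^{n}\Sp(r,\bb{C})/P_i$, and since $U = \pi^{-1}(\mc{M}^{rs}_L)$, the complement $V\setminus U = \pi^{-1}(\mc{M}^s_L\setminus\mc{M}^{rs}_L)$ is the preimage of the non-regularly-stable locus in the base. Because $\pi$ is a fiber bundle, taking preimages preserves codimension: the codimension of $V\setminus U$ in $V$ equals the codimension of $\mc{M}^s_L\setminus\mc{M}^{rs}_L$ in $\mc{M}^s_L$. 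So the first reduction is to a codimension estimate purely on the base moduli space of stable symplectic bundles.

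\smallskip\noindent\textbf{The two pieces of the complement.}
I would decompose the complement $(\mc{M}^{\boldsymbol{m,\alpha}}_{L(D)})^{sm}\setminus U$ into two parts and bound each separately. The first part is $V\setminus U$, handled by the fiber bundle argument above: this requires knowing that the non-regularly-stable stable symplectic bundles form a subset of codimension at least $2$ in $\mc{M}^s_L$. This is a standard type of estimate in the theory of symplectic/orthogonal moduli and should be citable from or adaptable to the setup of \cite{BiHol10}; regularly stable bundles are generic, and the deviation locus (bundles admitting nontrivial automorphisms compatible with $\varphi$ beyond $\pm 1$) is cut out by vanishing conditions that typically impose at least two independent constraints when $g\geq 3$ and $r\geq 2$. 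The second part is the locus $(\mc{M}^{\boldsymbol{m,\alpha}}_{L(D)})^{sm}\setminus V$, which sits over $\overline{\mc{M}}_L\setminus\mc{M}^s_L$, i.e.\ the strictly-semistable-or-non-stable locus of the base. Here I would argue that this locus likewise has high codimension in the base, and that intersecting with the smooth locus does not decrease the codimension bound below $2$.

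\smallskip\noindent\textbf{Carrying out the estimate.}
Concretely, I would first establish $\codim_{\mc{M}^s_L}(\mc{M}^s_L\setminus\mc{M}^{rs}_L)\geq 2$, transferring the known vector-bundle estimates to the symplectic setting using the genus hypothesis $g\geq 3$. Then, via the flatness and local triviality of $\pi$, conclude $\codim_V(V\setminus U)\geq 2$. Separately, I would bound $\codim(\overline{\mc{M}}_L\setminus\mc{M}^s_L)\geq 2$ in the base, so that after restricting to the smooth locus and pulling back along $\pi_0$ (which is proper with equidimensional fibers), the total complement retains codimension at least $2$. Finally, I would combine the two bounds: since the smooth locus is covered by $U$ together with a locus of codimension $\geq 2$ arising from both the non-regularly-stable base points and the non-stable base points, the stated inequality follows.

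\smallskip\noindent\textbf{The main obstacle.}
The hardest step will be the codimension estimate on the base for the non-regularly-stable symplectic locus $\mc{M}^s_L\setminus\mc{M}^{rs}_L$. Unlike the vector bundle case, symplectic stable bundles always admit the involution $\pm\Id$, so "regularly stable" is the correct generic notion, and one must carefully analyze which stable symplectic bundles admit \emph{extra} endomorphisms compatible with $\varphi$. Controlling the dimension of this deviation locus requires a deformation-theoretic computation — estimating the dimension of the relevant Hecke-type or parabolic-reduction strata — and ensuring the bound is at least $2$ rather than merely $1$. I expect the genus assumption $g\geq 3$ to be essential precisely here, and I would lean on the analogous analysis in \cite{BiHol10} to supply or model this estimate rather than reproving it from scratch.
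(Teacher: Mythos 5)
Your reduction of $\codim_V(V\setminus U)$ to a codimension estimate on the base via the fibration $\pi$ is exactly the paper's move, but the two inputs you feed into the argument are where things go wrong. First, the base estimate $\codim_{\mc{M}^s_L}(\mc{M}^s_L\setminus\mc{M}^{rs}_L)\,\geq\, 2$, which you correctly identify as the main obstacle and propose to attack by a deformation-theoretic analysis of the locus of stable bundles with extra automorphisms, needs no such analysis: by \cite[Corollary 3.4]{BiHo12}, $\mc{M}^{rs}_L$ is \emph{precisely the smooth locus} of $\overline{\mc{M}}_L$, and since $\overline{\mc{M}}_L$ is a normal projective variety its singular locus automatically has codimension at least $2$. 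So this step is a one-line consequence of normality in the paper, whereas in your proposal it is deferred to an unspecified adaptation of \cite{BiHol10} and never carried out.

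Second --- and this is the genuine gap --- your treatment of the remaining piece $(\mc{M}^{\boldsymbol{m,\alpha}}_{L(D)})^{sm}\setminus V$ does not work as stated. You propose to bound $\codim_{\overline{\mc{M}}_L}(\overline{\mc{M}}_L\setminus\mc{M}^s_L)$ and pull the bound back along $\pi_0$, asserting that $\pi_0$ is ``proper with equidimensional fibers''. But the flag-bundle structure of Lemma \ref{lem:fibre-bundle} is established only for $\pi=\pi_0|_V$, i.e., over $\mc{M}^s_L$; over the strictly semistable boundary, $\pi_0$ sends a parabolic stable bundle to the S-equivalence class of its underlying semistable symplectic bundle, the fibers are no longer flag varieties, and there is no a priori control on their dimension, so equidimensionality is unjustified and codimension does not transfer. (The base bound on $\overline{\mc{M}}_L\setminus\mc{M}^s_L$ is itself only asserted, not proven.) The paper sidesteps this entirely and never estimates anything over the boundary: writing $Y=\mc{M}^{\boldsymbol{m,\alpha}}_{L(D)}$, it uses normality of $Y$ itself, splitting into the case $U=Y^{sm}\cap V$, where $U$ is the smooth locus of the normal open subset $V$ and so $\codim_V(V\setminus U)\geq 2$ for free, and the case $U\subsetneq Y^{sm}\cap V$, where a direct dimension comparison --- $(Y^{sm}\cap V)\setminus U$ being a nonempty open subset of both $V\setminus U$ and $Y^{sm}\setminus U$ --- converts $\codim_V(V\setminus U)\geq 2$ into the desired bound on $Y^{sm}\setminus U$, with no separate estimate for the locus lying outside $V$. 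Until you either establish fiber-dimension control for $\pi_0$ over the strictly semistable locus or replace that step with a normality-based argument of this kind, your proof of the lemma is incomplete.
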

	
	\begin{proof}
		Denote $Y\,:=\, \mc{M}^{\boldsymbol{m,\alpha}}_{L(D)}$ and $Y^{sm}
		\,:=\, (\mc{M}^{\boldsymbol{m,\alpha}}_{L(D)})^{sm}$ for notational convenience. We have the diagram
		\begin{align}
			\xymatrix{U \ar@{^{(}->}[r] \ar[d]_{\pi|_{_U}} & V \ar[d]^{\pi}\\
				\mc{M}^{rs}_L \ar@{^{(}->}[r] & \mc{M}^s_L}
		\end{align}
		To prove the lemma, we need to consider two cases depending on whether $U \,=\, Y^{sm}\cap V$ or not.
		
		\textbf{Case I:}\ Assume that $U \,=\, Y^{sm}\cap V$. As $Y$ is normal, we have $\codim_{Y^{sm}}(Y\setminus Y^{sm})
		\,\geq\, 2$. The open subset $V\,\subset\, Y$ is also normal. Now, as $U \,=\,Y^{sm}\cap V$, it is
		the smooth locus of $V$, and thus
		\begin{align*}
			\codim_{V}(V\setminus U)\ \geq\ 2.
		\end{align*}
		This implies that
		\begin{align*}
			\codim_{(Y^{sm}\cup V)}((Y^{sm}\cup V)\setminus U)\ \geq\ 2,
		\end{align*}
		because the subset $V\,\subset\, (Y^{sm}\cup V)$ is open, and thus 
		\begin{align*}
			\codim_{Y^{sm}}(Y^{sm}\setminus U)\ \geq \ 2,
		\end{align*}
		because $(Y^{sm}\setminus U)\,\subset\, ((Y^{sm}\cup V)\setminus U)$ is open.
		
		\textbf{Case II:}\ Assume that $U\ \subsetneq\ (Y^{sm}\cap V)$.
		Consider the chain of open subsets
		$U\,\subsetneq\, Y^{sm}\cap V\,\subset\, V.$
		
		We will show that $\codim_V(V\setminus U) \,\geq\, 2$. For this, first note that
		since $\overline{\mc{M}}_L$ is a normal projective variety, and $\mc{M}^{rs}_L$ is precisely the smooth
		locus of $\overline{\mc{M}}_L$ \cite[Corollary 3.4]{BiHo12}, we have
		$\codim_{\overline{\mc{M}}_L}(\overline{\mc{M}}_L\setminus \mc{M}^{rs}_L)\geq 2$. This clearly implies 
		$\codim_{\mc{M}^s_L}(\mc{M}^{s}_L\setminus \mc{M}^{rs}_L)\,\geq\, 2$. As $\pi$ is a fibration,
		it now follows that $\codim_V(V\setminus U) \,\geq \,2$.
		
		Thus $\codim_{Y^{sm}\cap V}\left((Y^{sm}\cap V)\setminus U\right)\,\geq\, 2$ as well
		(here we are using that $U\,\subsetneq\, (Y^{sm}\cap V)$, so that $((Y^{sm}\cap V)\setminus U)$ is
		a nonempty open subset of $(V\setminus U)$). Now, $(Y^{sm}\cap V)\setminus U$ is a nonempty open subset
		of $Y^{sm}\setminus U$, and hence has the same dimension. Therefore, it follows that
		\begin{align*}
			\codim_{Y^{sm}}(Y^{sm}\setminus U) \,=\,\dim(Y^{sm}) - \dim (Y^{sm}\setminus U)
			\,=\, \dim(Y^{sm}\cap V) - \dim((Y^{sm}\cap V)\setminus U)
			\,\geq\, 2.
		\end{align*}
		This completes the proof.
	\end{proof}
	
	The Brauer group of $\mc{M}^{rs}_L$ has been computed in \cite{BiHol10}, which is briefly recalled.
	Let $\mathscr{M}^{rs}_{L}$ denote the moduli stack of regularly stable symplectic bundles
	on $X$ such that the symplectic form takes values in $L$. The map to the coarse moduli space
	$$h\,:\, \mathscr{M}^{rs}_{L}\,\longrightarrow \,\mc{M}^{rs}_{L}$$ is a $\mu_2$-gerbe. Let 
	\begin{align}\label{eqn:gerbe-class}
		\phi\,\ \in\,\ H^2_{\acute{e}t}(\mc{M}^{rs}_{L},\ \mu_2)
	\end{align}
	be the class of $h$. Consider the image $\iota_*(\phi)\,\in\, H^2_{\acute{e}t}(\mc{M}^{rs}_L,\,\bb{G}_m)$
	under the homomorphism defined using the inclusion map $\iota\,:\,\mu_2\subset\bb{G}_m$. The following
	statements hold (\cite[Corollary 6.5 and Proposition 8.1]{BiHol10}):
	\begin{enumerate}[(i)]\label{eqn:brauer-group-of-symplectic-moduli}
		\item If $\deg(L)$ is even, then\ $\Br(\mc{M}^{rs}_L) \,=\, \bb{Z}/2\bb{Z}$;
		
		\item if $\deg(L)$ is odd,
		\begin{equation*}
			\Br(\mc{M}^{rs}_L) \ =\
			\begin{cases}
				0 & \text{if}\ \frac{r}{2}\,\geq\,3 \ \text{is odd},\\
				\bb{Z}/2\bb{Z} & \text{if}\ \frac{r}{2}\,\geq\, 3\ \text{is even}.
			\end{cases}
		\end{equation*} 
	\end{enumerate}
	Furthermore, the generator for the above Brauer group is given by $\iota_*(\phi)$.
	
	On the other hand, there exists a projective Poincar\'e bundle $\bb{P}$ on $X\times \mc{M}^{rs}_L$ (see, e.g. 
	\cite{BiGo14}). Let $\bb{P}_x$ denote its restriction to $\{x\}\times \mc{M}_L^{rs}$ for a fixed point $x
	\,\in\, X$. 
	The class $\iota_*(\phi)$ as constructed above coincides with the class of the Brauer-Severi variety $\bb{P}_x$ 
	in $\Br(\mc{M}^{rs}_L)$, and thus the class of $\bb{P}_x$ generates $\Br(\mc{M}^{rs}_L)$.
	
	\begin{theorem}\label{thm:brauer-group-of-parabolic-symplectic-moduli-concentrated-weights}
		Fix a positive even integer $r$ and a finite subset of points $D=\{p_1,p_2,\cdots,p_n\}$ on $X$. Let $\boldsymbol{m}$ and $\boldsymbol{\alpha}$ be a system of multiplicities and weights of symmetric type at each point of $D$ (see Definition \ref{def:weighted-flag-of-symmetric-type}), such that $\boldsymbol{\alpha}$ is concentrated (see Definition \ref{def:concentrated-weight}) and does not contain $0$.
		Then the following statements hold:
		\begin{enumerate}[(i)]
			\item If $\deg(L)$ is even, $\Br\left((\mc{M}^{\boldsymbol{m,\alpha}}_{L(D)})^{sm}\right)
			\,\simeq\,\dfrac{\bb{Z}}{\gcd(2,m_{_{p_1,1}},m_{_{p_1,2}},\cdots,m_{_{p_1,\ell(p_1)}},\cdots,m_{_{p_n,1}},\cdots, m_{_{p_n,\ell(p_n)}})}$\\
			
			\item if $\deg(L)$ is odd,
			\begin{equation*}
				\Br\left((\mc{M}^{\boldsymbol{m,\alpha}}_{L(D)})^{sm}\right)\,\simeq\,
				\begin{cases}
					0 & \text{if}\ \frac{r}{2}\,\geq\,3 \ \text{is odd},\\
					\dfrac{\bb{Z}}{\gcd(2,m_{_{p_1,1}},m_{_{p_1,2}},\cdots,m_{_{p_1,\ell(p_1)}},\cdots,m_{_{p_n,1}},\cdots, m_{_{p_n,\ell(p_n)}})} & \text{if}\ \frac{r}{2}\geq 3\ \text{is even}.
				\end{cases}
			\end{equation*}
		\end{enumerate}
	\end{theorem}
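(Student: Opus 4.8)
The plan is to reduce the computation to the smooth open subset $U$ and then extract $\Br(U)$ from the flag–bundle structure of $\pi|_U$, feeding in the known value of $\Br(\mc{M}^{rs}_L)$ recalled above. Write $Y^{sm}:=(\mc{M}^{\boldsymbol{m,\alpha}}_{L(D)})^{sm}$, $M:=\mc{M}^{rs}_L$, and $\rho:=\pi|_U\colon U\to M$. By Lemma \ref{lem:codimension-estimate} the closed complement $Y^{sm}\setminus U$ has codimension at least $2$ in the smooth variety $Y^{sm}$. Since $H^2_{\text{\'et}}(-,\bb{G}_m)$ satisfies purity on smooth schemes, deleting a closed subset of codimension $\geq 2$ does not change it, so the restriction
\begin{equation*}
\Br(Y^{sm})\ \xrightarrow{\ \simeq\ }\ \Br(U)
\end{equation*}
is an isomorphism, and it suffices to compute $\Br(U)$.

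Next I would run the Leray spectral sequence for the fibration $\rho$,
\begin{equation*}
E_2^{p,q}\ =\ H^p_{\text{\'et}}\!\left(M,\ R^q\rho_*\bb{G}_m\right)\ \Longrightarrow\ H^{p+q}_{\text{\'et}}(U,\bb{G}_m),
\end{equation*}
and isolate the term computing $H^2_{\text{\'et}}(U,\bb{G}_m)=\Br(U)$ (on a smooth variety this $H^2$ is already torsion). By Lemma \ref{lem:fibre-bundle} the fiber $F=\prod_{i=1}^n\Sp(r,\bb{C})/P_i$ is a product of isotropic partial flag varieties; being smooth projective and rational it has $H^2(F,\mc{O}_F)=0$ and torsion-free cohomology concentrated in even degrees, so $\Br(F)=0$, whence $R^2\rho_*\bb{G}_m=0$ and $E_2^{0,2}=0$. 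The sheaf $R^1\rho_*\bb{G}_m$ is the relative Picard sheaf, a local system with stalks $\Pic(F)\simeq\bb{Z}^{k}$; since $M$ is a smooth moduli variety of stable objects it is simply connected, so this local system is constant and $H^1_{\text{\'et}}(M,\bb{Z})=0$, giving $E_2^{1,1}=0$. As $R^0\rho_*\bb{G}_m=\bb{G}_m$ yields $E_2^{2,0}=H^2_{\text{\'et}}(M,\bb{G}_m)=\Br(M)$, the only surviving contribution is the edge term
\begin{equation*}
\Br(U)\ \simeq\ E_\infty^{2,0}\ =\ \coker\!\left(d_2^{0,1}\colon H^0_{\text{\'et}}(M,R^1\rho_*\bb{G}_m)\ \longrightarrow\ \Br(M)\right).
\end{equation*}

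The heart of the argument is the differential $d_2^{0,1}$. The $\mu_2$–gerbe $h\colon\mathscr{M}^{rs}_L\to M$ means the universal symplectic bundle exists only as an $\iota_*\phi$–twisted sheaf $\mc{E}$ of weight one and rank $r$. Over $U$ the pullback $\rho^*\mc{E}$ carries, at each $p_i$, a tautological isotropic flag whose successive quotients $Q_{i,j}$ are weight-one twisted sheaves of rank $m_{_{p_i,j}}$, and the classes $\det Q_{i,j}$ generate the fiberwise Picard groups, hence $H^0_{\text{\'et}}(M,R^1\rho_*\bb{G}_m)$. On each fiber $\det Q_{i,j}$ is a genuine line bundle, and $d_2^{0,1}(\det Q_{i,j})$ is the obstruction to extending it over $U$; since the determinant of a weight-one twisted sheaf of rank $m$ is a weight-$m$ twisted line bundle, this obstruction is $m_{_{p_i,j}}\cdot\iota_*\phi$, i.e.
\begin{equation*}
d_2^{0,1}\!\left(\det Q_{i,j}\right)\ =\ m_{_{p_i,j}}\cdot\iota_*\phi\ \in\ \Br(M).
\end{equation*}
In the cases $\Br(M)\simeq\bb{Z}/2\bb{Z}$ (namely $\deg L$ even, or $\deg L$ odd with $\tfrac r2$ even) the generator is $\iota_*\phi$, so $m_{_{p_i,j}}\cdot\iota_*\phi=(m_{_{p_i,j}}\bmod 2)\,\iota_*\phi$; thus $\mathrm{Im}(d_2^{0,1})$ is generated by the parities of all the $m_{_{p_i,j}}$, and
\begin{equation*}
\Br(U)\ =\ \Br(M)\big/\mathrm{Im}(d_2^{0,1})\ \simeq\ \frac{\bb{Z}}{\gcd\!\left(2,\,m_{_{p_1,1}},\cdots,m_{_{p_1,\ell(p_1)}},\cdots,m_{_{p_n,1}},\cdots,m_{_{p_n,\ell(p_n)}}\right)}.
\end{equation*}
When $\deg L$ is odd and $\tfrac r2$ is odd, $\Br(M)=0$ forces $\Br(U)=0$; combined with the first step this yields the stated value of $\Br(Y^{sm})$ in both cases.

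I expect the main obstacle to be the identification $d_2^{0,1}(\det Q_{i,j})=m_{_{p_i,j}}\cdot\iota_*\phi$: making this rigorous requires working explicitly with the $\mu_2$–gerbe $h$ and its twisted universal bundle, checking that the determinant of a rank-$m_{_{p_i,j}}$ twisted subquotient multiplies the gerbe class by $m_{_{p_i,j}}$, and matching $\iota_*\phi$ with the Brauer class of the $\bb{P}_{p_i}$ used to build the flag bundle. A secondary point needing care is the vanishing $E_2^{1,1}=0$, that is, the triviality of the local system $R^1\rho_*\bb{G}_m$ and the simple connectedness of $M$.
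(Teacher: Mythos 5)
Your proposal is correct and follows essentially the same route as the paper's proof: the reduction to $U$ via Lemma \ref{lem:codimension-estimate} and purity \cite{Ce19}, the Leray spectral sequence for the flag fibration $\rho$ over $\mc{M}^{rs}_L$ with $E_2^{1,1}$ killed by simple connectedness \cite{BiMuPa21} and the torsion of $E_2^{0,2}$ vanishing (the paper cites \cite[Lemma 3.1]{BiDe11} rather than your slightly informal ``$\Br(F)=0$ whence $R^2\rho_*\bb{G}_m=0$''), and the identification of $\Br(U)$ as the cokernel of the map from the fiberwise Picard group to $\Br(\mc{M}^{rs}_L)=\langle\iota_*\phi\rangle$ computed from \cite{BiHol10}. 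The only cosmetic difference is the choice of generators: you use determinants of the twisted subquotients, with $d_2$-image $m_{_{p_i,j}}\cdot\iota_*\phi$, whereas the paper's generators $\zeta_{i,j}$ map under $\theta$ to $n_{_{i,j}}\cdot[\bb{P}_x]$ with $n_{_{i,j}}=\sum_{k=j}^{\ell(p_i)}m_{_{p_i,k}}$; since $r$ is even these two collections generate the same subgroup of $\bb{Z}/2\bb{Z}$, so the cokernels, and hence the conclusions, coincide.
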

	\begin{proof}
		Since $\Sp(r,\bb{C})$ is simply connected, by uniformization results it follows that $\mc{M}^{rs}_L$ is
		simply connected \cite[Corollary 3.10]{BiMuPa21}.
		As $\pi$ is a fiber bundle with fiber $\prod_{i=1}^{n}\Sp(r,\bb{C})/P_i$ for parabolic subgroups
		$P_i$ (see Lemma \ref{lem:fibre-bundle}), we have $\pi_*\bb{G}_m \,=\, \bb{G}_m$ while
		$R^1\pi_*\bb{G}_m$ is the constant sheaf with stalk $\Pic(\prod_{i=1}^{n}\Sp(r)/P_i)$.
		Moreover, $(R^2\pi_*\bb{G}_m)_{\textnormal{torsion}}\,=\,0$ (cf. \cite[Lemma 3.1]{BiDe11} for details). 
		
		Thus from the $5$-term exact sequence associated to the spectral sequence
		$$E_2^{p,q} \ =\ H^p(\mc{M}^{rs}_L,\, R^q\pi_*\bb{G}_m) \ \implies\ H^{p+q}(U,\, \bb{G}_m)$$
		we get the following exact sequence:
		\begin{align*}
			\cdots\, \longrightarrow\, \Pic\left(\prod_{i=1}^{n}\Sp(r,\bb{C})/P_i\right)
			\,\simeq \,\bigoplus_{i=1}^{n}\Pic\left(\Sp(r,\bb{C})/P_i\right) \, \overset{\theta}{\longrightarrow}
			\,\Br(\mc{M}^{rs}_L)\,\longrightarrow\, \Br(U)\,\longrightarrow\, 0.
		\end{align*}
		By Lemma \ref{lem:codimension-estimate} we have $\Br(U)\,\simeq\, \Br\left((\mc{M}^{\boldsymbol{m,\alpha}}_{L(D)})^{sm}\right)$ \cite{Ce19}, thus
		the above exact sequence becomes the following exact sequence: 
		\begin{align}\label{eqn:exact-sequence}
			\cdots\, \longrightarrow\, \Pic\left(\prod_{i=1}^{n}\Sp(r,\bb{C})/P_i\right) \,\simeq\,
			\bigoplus_{i=1}^{n}\Pic\left(\Sp(r,\bb{C})/P_i\right) \,\overset{\theta}{\longrightarrow}\,
			\Br(\mc{M}^{rs}_L)\,\longrightarrow\, \Br((\mc{M}^{\boldsymbol{m,\alpha}}_{L(D)})^{sm})\,\longrightarrow\, 0.
		\end{align}
		For each $1\,\leq\, i\,\leq\, n$, let $Q_i\,\in\, \mr{SL}(r,\bb{C})$ be a parabolic subgroup for which
		$P_i = \Sp(r,\bb{C})\cap Q_i$ (namely, $Q_i$ consists of block upper-triangular matrices in $SL(r)$
		of same block size as those of $P_i$). The inclusion maps $\rho_i\,:\, \mr{Sp}(r,\bb{C})/P_i
		\,\hookrightarrow\, \mr{SL}(r,\bb{C})/Q_i$ induce isomorphisms of Picard groups: 
		$$\rho_i^* \ :\ \Pic(\mr{SL}(r,\bb{C})/Q_i)\ \stackrel{\simeq}{\longrightarrow}\
		\Pic(\mr{Sp}(r,\bb{C})/P_i)
		$$
		for all $1\,\leq\, i\,\leq\, n$ (cf. \cite[\S~2]{PeTi22}).
		The generators of $\Pic(\mr{SL}(r,\bb{C})/Q_i)$ for each $i$ are known explicitly (cf. proof of \cite[Lemma 3.1]{BiDe11}). 
		We have
		$$
		\bigoplus_{i=1}^{n}\Pic(\Sp(r,\bb{C})/P_i)\ \simeq\ \bb{Z}^{\oplus N},
		$$
		where $N\,=\, \sum_{j=1}^{n}(\ell(p_j)-1)$. For each $2\,\leq\, j\,\leq\, \ell(p_i)$, define 
		\begin{align*}
			n_{_{i,j}} \ :=\ \sum_{k=j}^{\ell(p_i)} m_{_{p_{i},k}}.
		\end{align*}
		We have seen earlier that the class $[\bb{P}_x]$ generates $\Br(\mc{M}^{rs}_L)$. If $\zeta_{i,j}$ denote 
		the generators of \newline $\bigoplus_{i=1}^{n}\Pic(\Sp(r,\bb{C})/P_i)\,\simeq\, \bb{Z}^{\oplus N}$ as in 
		\cite[(5.9)]{BiDe11}, the map $\theta$ in \eqref{eqn:exact-sequence} sends $\zeta_{i,j}$ to $n_{_{i,j}}\cdot 
		[\bb{P}_x]$. This, together with the description of $\Br(\mc{M}^{rs}_L)$ just after \eqref{eqn:gerbe-class} 
		completes the proof.
	\end{proof}
	
	\subsection{The case of arbitrary generic weights}\hfill\\
	In order to address the situation where the system of weights $\boldsymbol{\alpha}$ is not concentrated, we make a few remarks regarding the construction of the moduli $\mc{M}^{\boldsymbol{m,\alpha}}_{L(D)}$. More generally, Let $G$ be a connected reductive algebraic group acting on a projective variety $Y$. In order to construct a GIT quotient of $Y$ under the action of $G$, one has to fix an ample $G$-linearization on $Y$. Various authors have studied how the GIT quotients vary as one varies the linearization, and the notions of chambers and walls can be made sense in the more general situation of the $G$--ample cone in the N\'eron-Severi group of $G$--linearized line bundles on $Y$ (\cite[Definition 0.2.1]{DolHu98}, \cite{Th96}). 
	
	Now, the moduli space $\mc{M}^{\boldsymbol{m,\alpha}}_{L(D)}$ has been constructed in \cite{WaWe24} under the exact same assumptions on the system of weights and multiplicities that we have considered here, namely that they are of symmetric type (cf. \cite[Definition 2.2]{WaWe24}). It is easy to see that although the authors in \cite{WaWe24} consider integer weights lying between $[0,K]$ for a fixed positive integer $K$, their notion matches exactly with ours upon division by the integer $K$.
	
	Fixing a system of rational weights amounts to fixing a polarization on a certain product of flag varieties for taking the GIT quotient by a suitable special linear group (cf. \cite[\S 3]{WaWe24}; see also \cite{BhRa89}). Thus, the set of all possible system of weights of symmetric type correspond to elements in the cone of ample linearized line bundles mentioned above (cf. \cite{DolHu98,Th96}). By the virtue of variation of GIT principles, this cone is separated by finitely many hyperplanes called \textit{walls}, and the connected components of these hyperplane complements are known as \textit{chambers}. The moduli space remains unchanged as long as the system of weights vary in inside a chamber. We shall call a system of weights as \textit{generic} if it is contained in a chamber. Now, since the collection of concentrated system of weights (see Definition \ref{def:concentrated-weight}) is clearly an open subset in this cone, and the intersections of walls are of codimension one, clearly there exists a concentrated system of weights inside the cone which is not contained in any wall, and thus there exists a \textit{generic} concentrated system of weights.
	
	Next, we show that the Brauer groups of the smooth locus of the parabolic symplectic moduli remain isomorphic when we cross a single wall in the ample cone. This will allow us to go from a generic and concentrated system of weights to arbitrary generic system of weights. A few auxiliary lemmas will be mentioned for this purpose.
	
	Let us denote $\malpha \,:=\, \mc{M}^{\boldsymbol{m,\alpha}}_{L(D)}$ and $\mbeta \,:=\,
	\mc{M}^{\boldsymbol{m,\beta}}_{L(D)}$ for notational convenience, and similarly denote by $\malphasm$ and
	$\mbetasm$ their respective smooth loci. Suppose $\boldsymbol{\alpha}$ and $\boldsymbol{\beta}$ be two generic systems of weights lying in two adjacent chambers separated by a single wall in the ample cone described above. 
	Using \cite[Theorem 3.5]{Th96}, there exist closed subschemes $Z_{\boldsymbol{\alpha}}\subset \malpha$ and $Z_{\boldsymbol{\beta}}\subset \mbeta$ along which the blow-ups are isomorphic, and moreover, the exceptional divisors are identified under the isomorphism. 
	Taking the complements of $Z_{\boldsymbol{\alpha}}$ and $Z_{\boldsymbol{\beta}}$ in their respective moduli, it immediately follows that there exist open subsets $\ualpha\ \subset\ \malpha$ and $\ubeta\ \subset\ \mbeta$, both having complements of codimension at least $2$, together with an isomorphism 
	\begin{align}\label{eqn:variation-of-weight-isomorphism}
		f \ : \ \ualpha\ \stackrel{\simeq}{\longrightarrow}\ \ubeta .
	\end{align}
	
	The next lemma is not strictly necessary for our purpose; we mention it for the sake of it being interesting in its own right.  
	\begin{lemma}\label{lem:picard}
		Let $\boldsymbol{\alpha}$ and $\boldsymbol{\beta}$ be two systems of generic weights. Then $\Pic(\malphasm)\simeq \Pic(\mbetasm)$.
	\end{lemma}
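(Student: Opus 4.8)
The plan is to deduce the isomorphism of Picard groups from the geometric input already assembled, namely the open subsets $\ualpha \subset \malpha$ and $\ubeta \subset \mbeta$ with complements of codimension at least $2$ and the isomorphism $f : \ualpha \xrightarrow{\simeq} \ubeta$ of \eqref{eqn:variation-of-weight-isomorphism}. The key observation is that for a normal variety, removing a closed subset of codimension at least $2$ does not change the Picard group, so the restriction maps $\Pic(\malphasm) \to \Pic(\ualpha \cap \malphasm)$ and $\Pic(\mbetasm) \to \Pic(\ubeta \cap \mbetasm)$ should be isomorphisms, and $f$ identifies the two target groups.

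First I would intersect the open sets with the smooth loci: set $\ualpha^{sm} := \ualpha \cap \malphasm$ and $\ubeta^{sm} := \ubeta \cap \mbetasm$. Since $f$ is an isomorphism of quasi-projective varieties, it carries the smooth locus of its source isomorphically onto the smooth locus of its target; because $\ualpha$ is open in $\malpha$, its smooth locus is exactly $\ualpha \cap \malphasm$, and similarly for $\ubeta$. Hence $f$ restricts to an isomorphism $\ualpha^{sm} \xrightarrow{\simeq} \ubeta^{sm}$, giving $\Pic(\ualpha^{sm}) \simeq \Pic(\ubeta^{sm})$ immediately.

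Next I would argue that passing from $\malphasm$ to the open subset $\ualpha^{sm}$ does not change the Picard group. Both $\malphasm$ and $\mbetasm$ are smooth, hence normal and in particular locally factorial, so on a smooth variety the Weil and Cartier divisor class groups agree and removing a closed subset of codimension at least $2$ induces an isomorphism on $\Pic$. The complement $\malpha \setminus \ualpha$ has codimension at least $2$ in $\malpha$; intersecting with the open subset $\malphasm$ keeps the codimension at least $2$, so $\malphasm \setminus \ualpha^{sm}$ has codimension at least $2$ in $\malphasm$. Therefore the restriction $\Pic(\malphasm) \xrightarrow{\simeq} \Pic(\ualpha^{sm})$ is an isomorphism, and symmetrically $\Pic(\mbetasm) \xrightarrow{\simeq} \Pic(\ubeta^{sm})$. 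Composing, $\Pic(\malphasm) \simeq \Pic(\ualpha^{sm}) \simeq \Pic(\ubeta^{sm}) \simeq \Pic(\mbetasm)$.

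The main point requiring care, and the only genuine obstacle, is verifying that the codimension bound on the complement in the full moduli space descends to the codimension bound inside the smooth locus; this is where I must confirm that intersecting a codimension $\geq 2$ closed subset with a dense open set does not accidentally drop the codimension, which holds because $\malphasm$ is a dense open subvariety and every irreducible component of the complement meets it in a subset of the same codimension or becomes empty. I would also note that the passage from two weight systems in \emph{arbitrary} generic chambers to adjacent chambers is handled by iterating the single-wall-crossing isomorphism along a path connecting the two chambers, each step preserving $\Pic$ by the above; since only finitely many walls are crossed, the composite identifies the Picard groups for any two generic systems of weights.
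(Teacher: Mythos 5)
Your proof is correct, but it takes a genuinely different route from the paper. You deduce the Picard isomorphism from the Thaddeus wall-crossing geometry already set up before the lemma: the isomorphism $f\,:\,\ualpha\stackrel{\simeq}{\longrightarrow}\ubeta$ of \eqref{eqn:variation-of-weight-isomorphism}, smoothness (hence local factoriality) of $\malphasm$ and $\mbetasm$, and the standard fact that on a locally factorial variety removing a closed subset of codimension at least $2$ leaves $\Pic$ unchanged; your verification that the codimension bound survives intersecting with the dense open smooth locus is the right point to check and your argument for it (nonempty open subsets of the irreducible components of the complement retain their dimension) is sound. The paper instead argues at the level of stacks: it invokes smoothness of the moduli stack of parabolic symplectic bundles \cite{HS10}, the weight-independent computation of its Picard group due to Laszlo--Sorger \cite{LaSo97}, restriction to the regularly stable locus (justified by stack-level codimension estimates from \cite{BiMuWe23} and \cite{BiHo12_2}), and the $\mu_2$-gerbe structure over the coarse moduli, identifying $\Pic(\malphasm)$ as the kernel of the weight map of \cite[Lemma 4.4]{BiHol10} --- a description manifestly independent of the weights, so no wall-crossing is needed and arbitrary generic $\boldsymbol{\alpha},\boldsymbol{\beta}$ are handled in one stroke. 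Your approach buys elementarity (no stacks, no Laszlo--Sorger input, and for $\Pic$ not even the purity theorem of \cite{Ce19} that the Brauer-group analogue requires) and it exactly mirrors the paper's own proof of Theorem \ref{thm:adjecent-chamber-brauwer-group}, while the paper's approach buys more: an explicit uniform description of the Picard group rather than a mere abstract isomorphism, and independence from the chamber-path iteration, which in your version must additionally invoke the finiteness of walls and connectivity of the ample cone through single-wall crossings, just as in Corollary \ref{cor:brauer-group-arbitrary-generic-weights}; note also that your iteration silently requires $f$ only for adjacent chambers, which is indeed all that \eqref{eqn:variation-of-weight-isomorphism} provides, so your reduction is consistent with the paper's setup.
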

	\begin{proof}
		Consider the moduli stack of symplectic parabolic bundles of quasiparabolic type $\boldsymbol{m}$, which is a smooth algebraic stack by \cite[Lemma 3.2.2]{HS10}. The Picard group of this moduli stack has a uniform description for any system of weights \cite[Theorem 1.1]{LaSo97}. Restricting ourselves to the parabolic regularly stable locus (which has isomorphic Picard group by codimension reasoning; see \cite[Lemma C.1]{BiMuWe23} and \cite[Lemma 7.3]{BiHo12_2}) gives a $\mu_2$-gerbe from the moduli stack of parabolic regularly stable symplectic bundles to its coarse moduli space.
		Thus, the Picard group of the coarse moduli of parabolic regularly stable symplectic bundles is the kernel of the weight map given in 
		\cite[Lemma 4.4]{BiHol10}.
		Hence the Picard group of the parabolic regularly stable coarse moduli has a similar description irrespective of the weight. Since $\malphasm$ and $\mbetasm$ are precisely the parabolic regularly stable loci of $\malpha$ and $\mbeta$ respectively, this proves our claim.
	\end{proof}


\begin{theorem}\label{thm:adjecent-chamber-brauwer-group}
	Let $\boldsymbol{\alpha}$ and $\boldsymbol{\beta}$ be two systems of generic weights which lie in two adjacent chambers described above, which are separated by a single wall. Then $$\Br(\malphasm)\simeq \Br(\mbetasm).$$
\end{theorem}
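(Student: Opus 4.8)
The plan is to leverage the isomorphism $f : \ualpha \stackrel{\simeq}{\longrightarrow} \ubeta$ from \eqref{eqn:variation-of-weight-isomorphism}, together with the codimension estimate on the complements of $\ualpha$ and $\ubeta$, to transport the Brauer group computation across the wall. The key observation is that the Brauer group of a smooth variety is insensitive to removing a closed subset of codimension at least $2$. Thus the entire difficulty is reduced to connecting the \emph{smooth loci} $\malphasm$ and $\mbetasm$ with the open sets $\ualpha$ and $\ubeta$ where the isomorphism lives.

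**First I would** establish that the smooth locus $\malphasm$ agrees with the parabolic regularly stable locus of $\malpha$, as was implicitly used in Lemma \ref{lem:picard}, and likewise for $\mbetasm$. Intersecting $\ualpha$ with the smooth locus $\malphasm$, I obtain an open subset $\ualpha \cap \malphasm \subset \malphasm$. Since $\malpha$ is normal and $\malphasm$ is its smooth locus, and since $\ualpha$ already has complement of codimension $\geq 2$ in $\malpha$, the open set $\ualpha \cap \malphasm$ has complement of codimension $\geq 2$ inside $\malphasm$ as well (this is the same type of codimension bookkeeping carried out in the proof of Lemma \ref{lem:codimension-estimate}). By the purity/excision result for Brauer groups of smooth varieties (the same reference \cite{Ce19} invoked in the proof of Theorem \ref{thm:brauer-group-of-parabolic-symplectic-moduli-concentrated-weights}), the restriction map
$$\Br(\malphasm)\ \longrightarrow\ \Br(\ualpha\cap\malphasm)$$
is an isomorphism, and similarly for the $\boldsymbol{\beta}$ side.

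**Then** I would check that the isomorphism $f$ in \eqref{eqn:variation-of-weight-isomorphism} carries the smooth locus of $\ualpha$ onto the smooth locus of $\ubeta$, so that $f$ restricts to an isomorphism $\ualpha\cap\malphasm \stackrel{\simeq}{\longrightarrow} \ubeta\cap\mbetasm$. This is automatic once $f$ is an isomorphism of varieties, since smoothness is a local property preserved under isomorphism. Consequently $f$ induces an isomorphism $\Br(\ubeta\cap\mbetasm)\stackrel{\simeq}{\longrightarrow}\Br(\ualpha\cap\malphasm)$ on Brauer groups by functoriality. Chaining these together yields
$$\Br(\malphasm)\ \simeq\ \Br(\ualpha\cap\malphasm)\ \simeq\ \Br(\ubeta\cap\mbetasm)\ \simeq\ \Br(\mbetasm),$$
which is the desired conclusion.

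**The main obstacle** I anticipate is verifying carefully that the open sets obtained by intersecting $\ualpha$ with the smooth locus still have complement of codimension $\geq 2$ \emph{relative to the smooth locus}, rather than relative to the full moduli space; this requires the same delicate dimension-counting argument as in Lemma \ref{lem:codimension-estimate}, handling the possibility that $\ualpha\cap\malphasm$ is a strict open subset of $\malphasm$. A subtler point worth confirming is that the subschemes $Z_{\boldsymbol{\alpha}}, Z_{\boldsymbol{\beta}}$ produced by Thaddeus's wall-crossing \cite[Theorem 3.5]{Th96} genuinely have codimension $\geq 2$ in the regularly stable (smooth) locus and not merely in the full moduli; this should follow from the geometry of the flip description, but it is the step where I would be most careful to ensure the normality and codimension hypotheses of the excision result \cite{Ce19} are met on both sides of the wall.
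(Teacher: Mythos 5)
Your proposal is correct and follows essentially the same route as the paper's own proof: restrict to $\ualpha\cap\malphasm$ and $\ubeta\cap\mbetasm$, observe that the isomorphism $f$ of \eqref{eqn:variation-of-weight-isomorphism} identifies these smooth loci, and apply purity \cite{Ce19} together with the bound $\codim_{\malphasm}(\malphasm\setminus\ualpha)\ \geq\ \codim_{\malpha}(\malpha\setminus\ualpha)\ \geq\ 2$, which holds because $\malphasm$ is a dense open subset of the irreducible variety $\malpha$. The only cosmetic difference is that the paper splits into two cases according to whether $\malphasm\subseteq\ualpha$ (or $\mbetasm\subseteq\ubeta$), whereas your uniform argument subsumes both, since the codimension hypothesis is vacuously satisfied when the complement is empty.
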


\begin{proof} 
	By the remarks preceding Lemma \ref{lem:picard}, we can find open subsets
	$\ualpha\,\subset\, \malpha$ and $\ubeta\,\subset\, \mbeta$, both having complements of codimension at least $2$, together with an isomorphism $
	f :\ualpha\ \stackrel{\simeq}{\longrightarrow}\ \ubeta $ (see \eqref{eqn:variation-of-weight-isomorphism}).
	As $\malpha$ is irreducible, it follows that $(\malphasm\cap \ualpha)\,\neq\, \emptyset$. We shall consider two cases depending on whether $\malphasm$ is contained in $\ualpha$ or not.
	
	\textbf{Case I\ :}\ \ \ Assume that $\malphasm\,\subseteq \,\ualpha$. In this
	case $\malphasm$ is the smooth locus of $\ualpha$. As $f$ is an isomorphism, $f(\malphasm)$ is the smooth
	locus of $\ubeta$. Since the smooth locus of $\ubeta$ is $\mbetasm\cap\ubeta$, we get that 
	$f(\malphasm) \,=\, \mbetasm\cap\ubeta$.
	This implies that
	$$\mbetasm\setminus f(\malphasm) \ =\ \mbetasm\setminus\ubeta,$$
	and hence\ $$\codim_{\mbetasm}(\mbetasm\setminus f(\malphasm)) \,=\, \codim_{\mbetasm}(\mbetasm\setminus\ubeta)
	\,\geq\, \codim_{\mbeta}(\mbeta\setminus\ubeta)\,\geq\, 2 .$$
	Consequently,
	\begin{align*}
		\Br(\mbetasm)\ \simeq \ &\Br(f(\malphasm))\quad\text{\cite[Theorem 6.1]{Ce19}}\\
		\simeq\ &\Br(\malphasm).
	\end{align*}
	If $\mbetasm\,\subseteq \,\ubeta$, the same reasoning would again show that $\Br(\malphasm)
	\,\simeq\,\Br(\mbetasm).$
	
	Thus, we are left with the case where $\malphasm\,\not\subset\,\ualpha$ and $\mbetasm\,\not\subset\,\ubeta$.
	
	\textbf{Case II:} \ Assume that $\malphasm\,\not\subset\,\ualpha$ and $\mbetasm\,\not\subset\,\ubeta$.
	Again, we have $$\codim_{\malphasm}(\malphasm\setminus\ualpha) \ \geq\, \codim_{\malpha}(\malpha\setminus\ualpha)
	\ \geq\ 2,$$
	and thus $\Br(\malphasm)\,\simeq\, \Br(\malphasm\cap\ualpha)$.
	Of course, the same isomorphism holds if $\boldsymbol{\alpha}$ is replaced by $\boldsymbol{\beta}$.
	
	Now, the isomorphism $f$ takes the smooth locus of $\ualpha$ to the smooth locus of $\ubeta$, which are
	given by $(\malphasm\cap\ualpha)$ and $(\mbetasm\cap\ubeta)$ respectively. Thus,
	$$\Br(\malphasm)\simeq\Br(\malphasm\cap\ualpha)\ \simeq\ \Br(\mbetasm\cap\ubeta)\simeq\Br(\mbetasm).$$
	This proves the theorem.
\end{proof}

\begin{corollary}\label{cor:brauer-group-arbitrary-generic-weights}
	Theorem
	\ref{thm:brauer-group-of-parabolic-symplectic-moduli-concentrated-weights} remains valid for any arbitrary
	generic system of weights in the ample cone.
\end{corollary}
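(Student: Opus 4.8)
The plan is to deduce the general case from the concentrated case (Theorem \ref{thm:brauer-group-of-parabolic-symplectic-moduli-concentrated-weights}) by transporting the Brauer group across walls using Theorem \ref{thm:adjecent-chamber-brauwer-group}. The crucial observation is that the right-hand side of the formula in Theorem \ref{thm:brauer-group-of-parabolic-symplectic-moduli-concentrated-weights} depends only on the fixed data $r$, $D$, $\deg(L)$ and the system of multiplicities $\boldsymbol{m}$, all of which remain unchanged as the weights vary within the ample cone of symmetric-type weights. Consequently it suffices to prove that the isomorphism class of $\Br((\mc{M}^{\boldsymbol{m,\alpha}}_{L(D)})^{sm})$ is the same for every generic system of weights $\boldsymbol{\alpha}$.

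First I would recall, from the discussion preceding Lemma \ref{lem:picard}, that there exists a generic concentrated system of weights $\boldsymbol{\alpha}_0$ (see Definition \ref{def:concentrated-weight}); since concentrated weights automatically satisfy $\alpha_{_{p,1}} > 0$, all the hypotheses of Theorem \ref{thm:brauer-group-of-parabolic-symplectic-moduli-concentrated-weights} hold, and the Brauer group $\Br((\mc{M}^{\boldsymbol{m},\boldsymbol{\alpha}_0}_{L(D)})^{sm})$ is computed by that theorem. Now let $\boldsymbol{\alpha}$ be an arbitrary generic system of weights, contained in some chamber $C$. As the relevant cone of ample linearizations is convex, hence path-connected, I would join an interior point of the chamber containing $\boldsymbol{\alpha}_0$ to $\boldsymbol{\alpha}$ by a path lying in the cone.

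The main step, and the only real obstacle, is to arrange that this path meets the walls one at a time. The walls form a finite collection of hyperplanes in the N\'eron-Severi group, and their pairwise intersections have codimension at least two; hence a generic segment (perturbed slightly off any codimension-two locus if necessary) crosses the union of walls in only finitely many points, each lying on a single wall and being a transverse crossing. The path therefore traverses a finite sequence of chambers $C_0, C_1, \ldots, C_k = C$, where $\boldsymbol{\alpha}_0 \in C_0$, and any two consecutive chambers $C_{i-1}$ and $C_i$ are adjacent and separated by exactly one wall. This genericity argument is standard in variation of GIT, and it hinges precisely on the codimension-two estimate for wall intersections.

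Finally, choosing generic representatives in each $C_i$, Theorem \ref{thm:adjecent-chamber-brauwer-group} applied to every consecutive pair yields an isomorphism of the corresponding Brauer groups of smooth loci across each single wall crossing. Composing these isomorphisms along the path gives $\Br((\mc{M}^{\boldsymbol{m,\alpha}}_{L(D)})^{sm}) \simeq \Br((\mc{M}^{\boldsymbol{m},\boldsymbol{\alpha}_0}_{L(D)})^{sm})$, and since the moduli space depends only on the chamber and not on the particular generic weight chosen inside it, the right-hand side is given by the formula of Theorem \ref{thm:brauer-group-of-parabolic-symplectic-moduli-concentrated-weights}. This establishes the corollary for all generic $\boldsymbol{\alpha}$.
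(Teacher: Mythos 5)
Your proposal is correct and follows essentially the same route as the paper: both reduce to Theorem \ref{thm:brauer-group-of-parabolic-symplectic-moduli-concentrated-weights} by starting from a generic concentrated system of weights and applying Theorem \ref{thm:adjecent-chamber-brauwer-group} along a finite chain of chambers, each pair separated by a single wall. Your path-perturbation argument merely spells out the standard VGIT justification (walls finite, pairwise intersections of codimension two) that the paper leaves implicit when it asserts such a chamber sequence exists.
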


\begin{proof}
	Since there are only finitely many walls, we can arrange the collection of chambers in the ample cone 
	in a sequence, say $C_1,\, C_2,\, \cdots,\, C_N$, where $C_1$ contains a concentrated system of
	weights (see Definition \ref{def:concentrated-weight}), and for each $1\,\leq\, i\,<\,N$, the chambers
	$C_i$ and $C_{i+1}$ are separated by a single wall. Choose systems of generic weights
	$\boldsymbol{\alpha}_i$ from each $C_i$ such that $\boldsymbol{\alpha}_1$ is
	concentrated. Theorem \ref{thm:adjecent-chamber-brauwer-group} now completes the proof.
\end{proof}

\section*{Acknowledgements}

We are very grateful to the referee for helpful comments to improve the exposition. The first-named author is partially supported by a J. C. Bose Fellowship (JBR/2023/000003). The second-named author is supported by the DST-INSPIRE Faculty Fellowship (Research Grant No.: DST/INSPIRE/04/2024/001521), Ministry of Science and Technology, Government of India.

\subsection*{Data availability statement}
No data were used or generated.

\subsection*{Conflict of interest statement}
On behalf of all authors, the corresponding author states that there is no conflict of interest.

\end{document}